\newtheorem{theorem}{Theorem}[section]
\newtheorem{lemma}[theorem]{Lemma}
\newtheorem{corollary}[theorem]{Corollary}
\theoremstyle{definition}
\newtheorem{definition}[theorem]{Definition}
\newtheorem{remark}[theorem]{Remark}
\numberwithin{equation}{section}
\title[Approximation by H\"older functions in Besov and T--L spaces]
{Approximation by H\"older functions in Besov and Triebel--Lizorkin spaces}
\author{Toni Heikkinen}
\author{Heli Tuominen}
\newcommand\rn{\mathbb R^n}
\newcommand\re{\mathbb R}
\newcommand\rv{\overline{\mathbb R}}
\newcommand\n{\mathbb N}
\newcommand\z{\mathbb Z}
\newcommand\D{\mathbb D}
\newcommand\ph{\varphi}
\newcommand\eps{\varepsilon}
\newcommand\M{\operatorname{\mathcal M}}
\newcommand\cH{\mathcal H}
\newcommand\dist{\operatorname{dist}}
\providecommand{\ch}[1]{\text{\raise 2pt \hbox{$\chi$}\kern-0.2pt}_{#1}}
\providecommand{\vint}[1]{\mathchoice
          {\mathop{\vrule width 5pt height 3 pt depth -2.5pt
                  \kern -9.5pt \kern 1pt\intop}\nolimits_{\kern -5pt{#1}}}%
          {\mathop{\vrule width 5pt height 3 pt depth -2.6pt
                  \kern -6pt \intop}\nolimits_{\kern -3pt{#1}}}%
          {\mathop{\vrule width 5pt height 3 pt depth -2.6pt
                  \kern -6pt \intop}\nolimits_{\kern -3pt{#1}}}%
          {\mathop{\vrule width 5pt height 3 pt depth -2.6pt
                  \kern -6pt \intop}\nolimits_{\kern -3pt{#1}}}}
\begin{document}

\begin{abstract}
In this paper, we show that Besov and Triebel--Lizorkin functions can be approximated by a H\"older continuous function both in the Lusin sense and in norm. The results are proven in metric measure spaces for Haj\l asz--Besov and Haj\l asz--Triebel--Lizorkin functions defined by a pointwise inequality. We also prove new inequalities for medians, including a Poincar\'e type inequality, which we use in the proof of the main result.
\end{abstract}

\keywords{Besov space, Triebel--Lizorkin space, median, H\"older approximation, metric measure space}
\subjclass[2010]{46E35,43A85} 

\date{\today}  
\maketitle

\section{Introduction}\label{intro}

\bigskip
By the classical Lusin theorem, each measurable function is continuous in a complement of a set of arbitrary small measure. 
For more regular functions, stronger versions of approximation results hold - the complement of the set where the function is not regular is smaller and is measured by a suitable capacity or a Hausdorff type content and the approximation can also be done in norm. 
This type of approximation by H\"older continuous functions for Sobolev functions was proven in \cite{Ma} by Mal\'y, who showed that each function $u\in W^{1,p}(\rn)$ coincides with a H\"older continuous function, that is close to $u$ in Sobolev norm, outside a set of small capacity.  
The result was strengthened by Bojarski, Haj\l asz and Strzelecki in \cite{BHS}, where they also discuss about the history of the  problem. For approximation results for Sobolev spaces, see also \cite{L}, \cite{MZ}, \cite{Sw1}, \cite{Sw2} and the references therein.

In the metric setting, approximation in Sobolev spaces $M^{1,p}(X)$, $p>1$, by H\"older continuous functions both in the Lusin sense, with the exceptional set measured using a Hausdorff content, and in norm, was studied by Haj\l asz and Kinnunen in \cite{HjKi}. The proof uses pointwise estimates, fractional sharp maximal functions and Whitney type smoothing. For the case $p=1$, see \cite{KiTu} and for fractional spaces, \cite{KP}.

\smallskip
In this paper, we study a similar approximation problem by H\"older continuous functions in Besov and Triebel--Lizorkin spaces in a metric measure space $X$ equipped with a doubling measure. We also assume that all spheres in $X$ are nonempty.
In the Euclidean case, Lusin type approximation in Besov and Triebel--Lizorkin spaces, and actually in more general spaces given by abstract definitions, has been done by Hedberg and Netrusov in \cite{HN}, see also \cite{Sto} (without a proof).

We prove the results for Haj\l asz--Besov spaces $N^s_{p,q}(X)$ and Haj\l asz--Triebel--Lizorkin spaces $M^s_{p,q}(X)$ which were recently introduced by Koskela, Yang and Zhou in \cite{KYZ} and studied for example in \cite{GKZ}, \cite{HeTu} and \cite{HIT}. 
This metric space approach is based on Haj\l asz type pointwise inequalities and it gives a simple way to define these spaces on a measurable subset of $\rn$ and on metric measure spaces. The definitions of spaces $N^s_{p,q}(X)$ and $M^s_{p,q}(X)$ as well as other definitions are given in Section \ref{preliminaries}.

In the Euclidean case, $N^s_{p,q}(\rn)=B^s_{p,q}(\rn)$ and $M^s_{p,q}(\rn)=F^s_{p,q}(\rn)$ for all $0<p<\infty$, $0<q\le\infty$, 
$0<s<1$, where $B^s_{p,q}(\rn)$ and $F^s_{p,q}(\rn)$ are Besov spaces and Triebel--Lizorkin spaces defined via an
$L^p$-modulus of smoothness, see \cite{GKZ}. 
Recall also that the Fourier analytic approach gives the same spaces when $p>n/(n+s)$ in the Besov case and when $p,q>n/(n+s)$ in the Triebel--Lizorkin case. Hence, for such parameters, our results hold also for the classical Besov and Triebel--Lizorkin spaces.

\smallskip
Our first main result tells that Besov functions can be approximated by H\"older continuous functions such that the approximating function coincides with the original function outside a set of small Hausdorff type content and the Besov norm of the difference is small.

\begin{theorem}\label{holder representative B}
Let $0<s<1$. Let $0<p,q<\infty$ and $0<\beta< s$ or $0<q\le p<\infty$ and $\beta=s$.
For each $u\in N^s_{p,q}(X)$ and $\eps>0$, there is an open set $\Omega$ and a function 
$v\in N^s_{p,q}(X)$ such that   
\begin{enumerate}
\item\label{holder 1}$u=v$ in $X\setminus \Omega$,  
\item\label{holder 2}$v$ is $\beta$-H\"older continuous on every bounded set of $X$,  
\item\label{holder 3}
$\|u-v\|_{ N^s_{p,q}(X)}<\eps$,
\item\label{holder 4}$\cH^{(s-\beta)p,q/p}_R(\Omega)<\eps$,
\end{enumerate}
where $R=2^6$.
\end{theorem}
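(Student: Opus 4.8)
The plan is to follow the classical Hajłasz–Kinnunen scheme, adapted to the Besov setting via medians. Given $u \in N^s_{p,q}(X)$ with a fractional $s$-gradient $\vec{g} = (g_k)_{k \in \z}$ satisfying $\|\vec{g}\|_{\ell^q(L^p)} < \infty$, the first step is to introduce the ``bad set'' $\Omega$ where $u$ fails to be pointwise Hölder. Concretely, for a suitable maximal-type function built from the $g_k$ — say the discrete fractional sharp maximal function whose $\ell^q$-sum controls oscillations of medians of $u$ over balls at dyadic scales — one sets $\Omega = \{x : M^\#_\beta u(x) > \lambda\}$ for an appropriate threshold $\lambda = \lambda(\eps)$, together possibly with the (measure-zero, or small-content) set where $u$ is not median-continuous. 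The content estimate \eqref{holder 4}, namely $\cH^{(s-\beta)p,\,q/p}_R(\Omega) < \eps$, should come out of a weak-type/Hausdorff-content estimate for this maximal operator against the quantity $\|\vec{g}\|_{\ell^q(L^p)}$, using the median Poincaré inequality promised in the abstract; choosing $\lambda$ large forces both the content and (via absolute continuity of the norm) the tail $\|\vec{g}\,\ch{\Omega}\|_{\ell^q(L^p)}$ to be $<\eps$.

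The second step is the construction of $v$ on $X \setminus \Omega$ and its Hölder extension. On the good set $X \setminus \Omega$, the pointwise oscillation control gives that (a precise representative of) $u$ is $\beta$-Hölder there; the standard argument is that for $x,y \in X\setminus\Omega$ with $d(x,y)$ comparable to $2^{-k}$, telescoping the medians of $u$ over balls $B(x,2^{-j})$, $B(y,2^{-j})$ for $j \ge k$ yields $|u(x)-u(y)| \lesssim \lambda\, d(x,y)^\beta$. One then extends this Hölder function from $X\setminus\Omega$ to all of $X$ using a Whitney decomposition of $\Omega$ and a Lipschitz partition of unity: set $v = u$ on $X\setminus\Omega$ and $v(x) = \sum_i \varphi_i(x)\, u_{B_i}$ (median over the Whitney ball $B_i$) on $\Omega$, where $\{\varphi_i\}$ is subordinate to the Whitney cover. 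The geometry of the Whitney decomposition (balls of radius comparable to $\dist(\cdot,X\setminus\Omega)$ with bounded overlap) gives that $v$ is $\beta$-Hölder on every bounded set, proving \eqref{holder 1} and \eqref{holder 2}.

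The third step is the norm estimate \eqref{holder 3}. One needs a fractional $s$-gradient for $u - v$ that is small in $\ell^q(L^p)$. Since $u = v$ off $\Omega$, the difference is supported in $\Omega$, and on $\Omega$ one estimates a Hajłasz $s$-gradient of $v$ (hence of $u-v$) in terms of the $g_k$ restricted to a neighborhood of $\Omega$ plus terms coming from the Whitney smoothing — these are handled by the bounded overlap of the Whitney balls and the median Poincaré inequality, so that $\|\vec{g}_{u-v}\|_{\ell^q(L^p)} \lesssim \|\vec g\,\ch{\widehat\Omega}\|_{\ell^q(L^p)}$, which is $<C\eps$ by the choice of $\lambda$. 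Rescaling $\eps$ at the outset finishes the proof.

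The main obstacle I expect is making the median machinery carry the Whitney smoothing cleanly: one must show that the smoothed function $v$ on $\Omega$ has a controlled fractional gradient built from the $g_k$, and this requires a median version of the telescoping/Poincaré estimates that is uniform across scales and robust under the partition of unity — in particular getting the correct exponents $(s-\beta)p$ and $q/p$ in the content \eqref{holder 4} hinges on tracking how the $\ell^q$-summability in $k$ interacts with the dyadic Whitney scales. The borderline case $\beta = s$ with $q \le p$ is the other delicate point, since there one loses the gain from $s - \beta > 0$ and must instead exploit $q/p \le 1$ to sum a geometric-type series in the content estimate.
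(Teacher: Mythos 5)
Your outline follows essentially the same route as the paper: a bad set defined as a superlevel set of the fractional sharp $\gamma$-median maximal function, a weak-type Netrusov--Hausdorff content estimate obtained from the median Poincar\'e inequality plus a $5r$-covering argument, a Whitney extension using medians $m^\gamma_u(2B_i)$ on the bad set, and a fractional $s$-gradient for $u-v$ supported in (a neighbourhood of) $\Omega$ whose norm is killed by absolute continuity. Two concrete pieces are missing, however. First, you work directly with the unrestricted sharp maximal function on all of $X$; this step would fail as stated, because at large scales $r>1$ the Poincar\'e inequality gives no usable bound and the covering balls would violate the constraint $r_j\le R=2^6$ in the content. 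The paper first reduces to $u$ supported in a ball $B(x_0,1)$ (showing that then $E_\lambda\subset B(x_0,2)$ for large $\lambda$, so only radii $\lesssim 1$ matter), and recovers the general case by a partition of unity and the Leibniz rule; this localisation is also exactly why the conclusion is only ``H\"older on every bounded set'', a point your sketch asserts but does not explain. Second, the ``main obstacle'' you flag --- a controlled fractional gradient for the Whitney-smoothed $v$ --- is the technical heart and is left unresolved; the paper's resolution is the explicit choice $\tilde g_k=\sup_{j}2^{-|j-k|\delta}M_{\gamma/C}g_j$ with $\delta=\min\{s,1-s\}$, where the geometric coupling across scales is forced by the two regimes $d(x,y)\le\dist(x,X\setminus E_\lambda)$ (which pulls in $g_j$ with $j\le k$ at rate $2^{(j-k)(1-s)}$) and $d(x,y)>\dist(x,X\setminus E_\lambda)$ (which pulls in $j\ge k$ at rate $2^{(k-j)s}$), and whose $\ell^q(L^p)$ bound then follows from the boundedness of $M_\gamma$ and the convolution-type summing lemma.

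One smaller correction: your diagnosis of the borderline case $\beta=s$, $q\le p$ is off. The content estimate itself works for all $0<p,q<\infty$ and all $\beta\le s$, because the $(\sum_{j\in I_i}\cdots)^{q/p}$ structure of $\cH^{(s-\beta)p,q/p}_R$ matches the $\ell^q(L^p)$ norm of $(g_k)$ term by term; no geometric series is needed there. The restriction $q\le p$ enters only when $\beta=s$ because then the codimension is $0$, and the comparison $\mu(E)\le C\cH^{0,\theta}_R(E)$ (needed to get $\mu(E_\lambda)\to0$ and hence the norm convergence via absolute continuity) holds only for $\theta=q/p\le1$.
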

Here $\cH^{(s-\beta)p,q/p}_R$ is the Netrusov--Hausdorff content of codimension $(s-\beta)p$, see Definition \ref{HN content}. 
Since the underlying measure is smaller than a constant times the Netrusov--Hausdorff content by Lemma \ref{measure and NH content}, the content estimate \eqref{holder 4} is stronger than a corresponding estimate for the measure.

In the case of Triebel--Lizorkin spaces, the exceptional set is measured by a Hausdorff content.
\begin{theorem}\label{holder representative TL}
Let $0<p<\infty$. Let $0<s<1$ and $0<q< \infty$ or $0<s\le 1$ and $q=\infty$. Let $0<\beta\le s$.
If $u\in M^s_{p,q}(X)$, then for any $\eps>0$, there is an open set $\Omega$ and a function 
$v\in M^s_{p,q}(X)$ such that   
\begin{enumerate}
\item $u=v$ in $X\setminus \Omega$,  
\item $v$ is $\beta$-H\"older continuous on every bounded set of $X$,  
\item 
$\|u-v\|_{ M^s_{p,q}(X)}<\eps$,
\item $\cH^{(s-\beta)p}_R(\Omega)<\eps$,
\end{enumerate}
where $R=2^6$.
\end{theorem}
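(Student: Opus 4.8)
The plan is to follow the Whitney smoothing strategy that Haj\l asz and Kinnunen used for $M^{1,p}$, adapted to the fractional and possibly small-exponent setting by systematically replacing integral averages with medians. Fix $u\in M^s_{p,q}(X)$ and a fractional $s$-gradient $\vec g=(g_k)_{k\in\z}$ of $u$ whose $L^p(\ell^q)$ norm is as close as we like to the infimum defining $\|u\|_{M^s_{p,q}(X)}$. From $\vec g$ I would build a single ``fractional sharp maximal function of median type'' $G$ (morally $G(x)=\sup_{r>0}r^{-\beta}\inf_{c\in\re}m^\gamma_{u-c}(B(x,r))$, but estimated from above in terms of $\vec g$) with two properties. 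First, $\|G\|_{L^p(X)}\lesssim\|\vec g\|_{L^p(\ell^q)}$ for the whole range of $p,q$ in the theorem; the point is the elementary bound $m^\gamma_{|h|}(B)\le\bigl(\gamma^{-1}\vint{B}|h|^t\,d\mu\bigr)^{1/t}$ valid for every $t>0$, which lets one dominate any median maximal function by $\bigl[\M(|h|^t)\bigr]^{1/t}$ and then invoke the (Fefferman--Stein) boundedness of $\M$ on $L^{p/t}(\ell^{q/t})$ once $t<\min\{p,q,1\}$, bypassing the failure of maximal estimates for small exponents. Second, on the set $\{G\le\lambda\}$ the pointwise defined (median) representative $\tilde u$ of $u$ is $\beta$-H\"older continuous on every bounded set with constant $\lesssim\lambda$; this is obtained by telescoping medians over dyadic balls, each step estimated by the Poincar\'e type inequality for medians proved earlier in the paper, the resulting series converging because $\beta>0$ (the fractional weight $2^{-m(s-\beta)}$ matching the scaling $r^{-(s-\beta)}$ also when $\beta=s$).

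Next, fix $\lambda>0$, set $U_\lambda=\{x\in X:G(x)>\lambda\}$, and let $\Omega$ be an open neighbourhood of $U_\lambda$ obtained from the balls of a Vitali covering that realizes a weak-type bound for $G$; a fractional weak-type covering argument of the same type as in the $M^{1,p}$ case yields $\cH^{(s-\beta)p}_R(\Omega)\lesssim\lambda^{-p}\|\vec g\|_{L^p(\ell^q)}^p$, and in particular $\mu(\Omega)\to0$ as $\lambda\to\infty$. On $X\setminus\Omega$ the representative $\tilde u$ is $\beta$-H\"older on bounded sets with constant $\lesssim\lambda$; I would extend it to a function $v$ on all of $X$ by taking a Whitney decomposition $\{B_i\}$ of the open set $\Omega$ with an associated Lipschitz partition of unity $\{\ph_i\}$ (so $\operatorname{Lip}\ph_i\lesssim r_i^{-1}$ and the $B_i$ have bounded overlap and radii comparable to those of their neighbours), and putting $v=\tilde u$ on $X\setminus\Omega$ and $v=\sum_i\ph_i\,\tilde u_{B_i}$ on $\Omega$, where $\tilde u_{B_i}$ is a median of $u$ over $B_i$. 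Routine estimates with the Whitney geometry show that $v$ is $\beta$-H\"older on every bounded set with constant $\lesssim\lambda$, which gives (2), and $v=u$ $\mu$-a.e.\ on $X\setminus\Omega$; modifying $v$ on the exceptional null set we may arrange $v=u$ everywhere on $X\setminus\Omega$, giving (1). Taking $\lambda$ large gives (4).

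It remains to verify the norm estimate (3). Put $w=u-v$, so $w=0$ on $X\setminus\Omega$ and $v=u-w\in M^s_{p,q}(X)$ once $w$ is. Using the Whitney structure and the splitting of $|w(x)-w(y)|$ through the medians $\tilde u_{B_i}$ and the partition of unity, one checks that $w$ has a fractional $s$-gradient $\vec h=(h_k)$ that vanishes outside a fixed dilate $C\Omega$ and is dominated there by $g_k$ together with restricted median maximal functions of the $g_j$ over balls contained in $C\Omega$; likewise $|w|$ is dominated on $C\Omega$ by a median maximal function of $|u|$ and vanishes elsewhere. Since $\mu(C\Omega)\to0$, absolute continuity of the integral combined with the $L^p$, resp.\ $L^p(\ell^q)$, bounds for these median maximal operators (again via the $t$-trick) forces $\|w\|_{L^p(X)}+\|\vec h\|_{L^p(\ell^q)}\to0$ as $\lambda\to\infty$. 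Choosing $\lambda$ so large that this quantity and $\cH^{(s-\beta)p}_R(\Omega)$ are both $<\eps$ completes the proof; when $q=\infty$ the $\ell^q$ norm is a single supremum and the argument only simplifies (the Fefferman--Stein step reduces to $\sup_k\M g_k\le\M(\sup_k|g_k|)$).

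The main obstacle is this last step in the regime $p\le1$ or $q\le1$. One cannot use the Hardy--Littlewood maximal function directly, so every oscillation estimate --- the Poincar\'e inequality, the telescoping bound for $G$, and the control of the Whitney gradient of $v$ --- must be carried out with medians and only afterwards converted to an $L^p$ (or $L^p(\ell^q)$) statement through the pointwise domination $m^\gamma_{|h|}(B)\le\bigl(\gamma^{-1}\vint{B}|h|^t\,d\mu\bigr)^{1/t}$; this is exactly what the median Poincar\'e and covering lemmas of the paper are for. The second delicate point is that the fractional gradient $\vec h$ of $w$ on $\Omega$ must be dominated by a function with \emph{vanishing} tail over $\Omega$ rather than merely a bounded one --- this is what makes (3) tend to $0$ instead of staying comparable to $\|\vec g\|_{L^p(\ell^q)}$ --- which forces one to keep the pointwise maximal-function values on the Whitney balls instead of crudely bounding them by $\lambda$.
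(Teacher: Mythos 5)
Your strategy coincides with the paper's: a Whitney extension over the superlevel set of a fractional sharp median maximal function, oscillation estimates via the median Poincar\'e inequality, the bound $m^{\gamma}_{|h|}(B)\le\bigl(\gamma^{-1}\vint{B}|h|^t\,d\mu\bigr)^{1/t}$ feeding the Fefferman--Stein vector-valued maximal theorem at a small exponent $t$, and absolute continuity of $\|\cdot\|_{L^p(X;\,l^q)}$ to kill the norm of the fractional gradient of $u-v$, which is supported in the bad set. Two steps, however, do not close as written. The first is the missing localization. You take $G(x)=\sup_{r>0}r^{-\beta}\inf_{c}m^{\gamma}_{|u-c|}(B(x,r))$ with an unrestricted supremum. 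Conclusion (4) requires a covering of $\Omega$ by balls of radius at most $R=2^6$, but the balls witnessing $G(x)>\lambda$ may have arbitrarily large radii: for large $r$ one only gets $r^{-\beta}\inf_{c}m^{\gamma}_{|u-c|}(B(x,r))\le r^{-\beta}\bigl(\gamma^{-1}\mu(B(x,r))^{-1}\bigr)^{1/p}\|u\|_{L^p(X)}$, which is not uniformly $\le\lambda$ in $x$ when $\mu(B(x,r))$ can be small. The paper first assumes $\operatorname{supp}u\subset B(x_0,1)$, shows that the supremum over $r>1$ is then bounded by a fixed $\lambda_0$ (so for $\lambda>\lambda_0$ only radii $\lesssim 1$ enter the covering and $E_\lambda\subset B(x_0,2)$), and only at the end removes the support assumption with a Lipschitz partition of unity and the Leibniz rule (for $s=1$, $q=\infty$ one must use the Haj\l asz--Kinnunen version, since the paper's Leibniz lemma requires $s<1$). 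This localization is also why (2) asserts H\"older continuity only on bounded sets: the H\"older constant of each localized piece depends on the piece. You need either to insert this reduction or to truncate the supremum in $G$ and control the large-radius contribution separately.

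The second issue is the source of the single-sum Hausdorff content bound. Running the fractional weak-type covering argument scale by scale against the individual $g_i$ produces $\lambda^{-p}\sum_i\|g_i\|^p_{L^p(X)}$, an $\ell^p(L^p)$ quantity, which is neither the Besov nor the Triebel--Lizorkin norm; in the Besov case it is precisely this phenomenon that forces the Netrusov--Hausdorff content $\cH^{(s-\beta)p,q/p}_R$. The reason a plain $\cH^{(s-\beta)p}_R$ suffices for $M^s_{p,q}$ is that one first collapses the fractional gradient to the single function $g=\sup_{k}g_k$, which satisfies $\|g\|_{L^p(X)}\le\|(g_k)\|_{L^p(X;\,l^q)}$ and is an honest (non-fractional) $s$-gradient of $u$; the standard covering argument applied to this one function then yields $\cH^{(s-\beta)p}_R(E_\lambda)\le C\lambda^{-p}\|g\|^p_{L^p(X)}$. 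This collapse is the one point where the Triebel--Lizorkin proof genuinely diverges from the Besov one, and it should be made explicit.
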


In the case $q=\infty$, Haj\l asz--Triebel--Lizorkin space $M^s_{p,q}(X)$ coincides with the Haj\l asz space $M^{s,p}(X)$. 
Recall from \cite{H} that, for $p>1$, $M^{1,p}(\rn)=W^{1,p}(\rn)$, whereas for $n/(n+1)<p\le 1$,  $M^{1,p}(\rn)$ coincides with the Hardy--Sobolev space $H^{1,p}(\rn)$ by \cite[Thm 1]{KS}.
The following corollary of Theorem \ref{holder representative TL} extends the Sobolev space approximation results of \cite{HjKi}, \cite{KP}, and  \cite{KiTu} to the case $0<p<1$.

\begin{corollary}\label{holder representative H}
Let $0<s\le 1$, $0<p<\infty$, and let $0<\beta\le s$.
If $u\in M^{s,p}(X)$, then for any $\eps>0$, there is an open set $\Omega$ and a function 
$v\in M^{s,p}(X)$ such that   
\begin{enumerate}
\item $u=v$ in $X\setminus \Omega$,  
\item $v$ is $\beta$-H\"older continuous on every bounded set of $X$,  
\item $\|u-v\|_{ M^{s,p}(X)}<\eps$,
\item $\cH^{(s-\beta)p}_R(\Omega)<\eps$,
\end{enumerate}
where $R=2^6$.
\end{corollary}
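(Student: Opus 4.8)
The plan is to deduce Corollary \ref{holder representative H} directly from Theorem \ref{holder representative TL} applied at the endpoint $q=\infty$. The one fact needed is the identification, recorded in Section \ref{preliminaries} (see also \cite{KYZ}), of the Haj\l asz space $M^{s,p}(X)$ with the Haj\l asz--Triebel--Lizorkin space $M^s_{p,\infty}(X)$, together with the comparability of their (quasi-)norms: if $g\ge 0$ is a Haj\l asz gradient of $u$ of order $s$, then the constant sequence $(g)_k$ is a fractional $s$-gradient of $u$, while conversely, given a fractional $s$-gradient sequence $(g_k)_k$ of $u$, the function $\sup_k g_k$ is, up to a fixed multiplicative constant, a Haj\l asz gradient of $u$. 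Hence there is a constant $C\ge 1$, depending only on the data, such that $C^{-1}\|\cdot\|_{M^{s,p}(X)}\le\|\cdot\|_{M^s_{p,\infty}(X)}\le C\|\cdot\|_{M^{s,p}(X)}$.

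Granting this, I would first check that the parameter ranges match: for $q=\infty$, Theorem \ref{holder representative TL} is valid for $0<p<\infty$, $0<s\le 1$ and $0<\beta\le s$, which is precisely the hypothesis of the corollary. Then, given $u\in M^{s,p}(X)=M^s_{p,\infty}(X)$ and $\eps>0$, I would apply Theorem \ref{holder representative TL} with $q=\infty$ and with $\eps$ replaced by $\eps/C$. This yields an open set $\Omega$ and a function $v\in M^s_{p,\infty}(X)=M^{s,p}(X)$ with $u=v$ on $X\setminus\Omega$, with $v$ being $\beta$-H\"older continuous on every bounded subset of $X$, with $\cH^{(s-\beta)p}_R(\Omega)<\eps/C\le\eps$ for $R=2^6$, and with $\|u-v\|_{M^s_{p,\infty}(X)}<\eps/C$; the norm comparison then gives $\|u-v\|_{M^{s,p}(X)}\le C\,\|u-v\|_{M^s_{p,\infty}(X)}<\eps$. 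This establishes conclusions (1)--(4) of the corollary.

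The argument is entirely routine, so there is no real obstacle: the only substantive ingredient is the coincidence of $M^{s,p}(X)$ with $M^s_{p,\infty}(X)$ and the norm comparison, which belongs to the preliminaries rather than to the proof proper. The rescaling $\eps\mapsto\eps/C$ is harmless because the Lusin-type and content conclusions are monotone in $\eps$, so it is only the norm estimate that needs the adjustment.
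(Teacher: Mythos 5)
Your proposal is correct and is exactly the route the paper intends: the corollary is deduced from Theorem \ref{holder representative TL} with $q=\infty$ via the identification $M^{s,p}(X)=M^s_{p,\infty}(X)$ with comparable norms (cited in the paper from \cite[Prop.~2.1]{KYZ}), and the parameter range $0<s\le 1$ is precisely what the theorem allows at $q=\infty$. Your added remark about rescaling $\eps$ to absorb the norm-comparison constant is a harmless and correct refinement.
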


In the proofs of approximation results, we use $\gamma$-medians $m^\gamma_u$ instead of integral averages. 
This enables us to handle also small parameters $0<p,q\le1$. 
Medians behave much like integral averages, but have an advantage that the function needs not be locally integrable.
We prove several new estimates relating a function and its (fractional) $s$-gradient in terms of medians.
One of these estimates is a version of a Poincar\'e inequality, which says that if $u$ is measurable and almost everywhere finite 
and $g$ is an $s$-gradient of $u$, then
\begin{equation}\label{median poincare 1}
\inf_{c\in\re}m_{|u-c|}^{\gamma}(B(x,r))
\le 2^{s+1}r^sm_{g}^{\gamma}(B(x,r))
\end{equation}
for every ball $B(x,r)$. 
We think that \eqref{median poincare 1} as well as Theorem \ref{mm}, which is a version of \eqref{median poincare 1}
for fractional $s$-gradients, are of an independent interest and not just tools in the proofs of our main results. 

The use of medians instead of integral averages also simplifies the proofs of certain estimates.
For example, the pointwise estimate
\begin{equation}\label{pw av}
|u(x)-u_{B(x,r)}|\le Cr^s\left(\M g^p(x)\right)^{1/p},
\end{equation}
where $Q/(Q+s)<p< 1$, requires a chaining argument and a Sobolev--Poincar\'e inequality, while
the corresponding estimate for medians,
\begin{equation}\label{pw med}
|u(x)-m_u^\gamma(B(x,r))|\le Cr^s\left(\M g^p(x)\right)^{1/p},
\end{equation}
is almost trivial and holds for all $p>0$.
The advantage of medians over integral averages becomes even more evident when one considers estimates like
\eqref{pw av} and \eqref{pw med} for fractional gradients, see Remark \ref{medians are better}.

\smallskip
The paper is organized as follows. 
In Section \ref{preliminaries}, we introduce the notation and the standard assumptions used in the paper and give the definitions of 
Haj\l asz--Besov and Haj\l asz--Triebel--Lizorkin spaces, $\gamma$-medians and Netrusov--Hausdorff content. 
In Section \ref{lemmas}, we present lemmas for contents and medians needed in the proof of the approximation result. 
Section \ref{proofs} is devoted to the proofs of the approximation results, Theorems \ref{holder representative B} and \ref{holder representative TL}. 
Finally, in the Appendix, we show that spaces $N^s_{p,q}(X)$ and $M^s_{p,q}(X)$ are complete. This is not proved in the earlier papers where these spaces are studied.

\section{Notation and preliminaries}\label{preliminaries}
In this paper,  $X=(X, d,\mu)$ is a metric measure space equipped with a metric $d$ and a Borel regular,
doubling outer measure $\mu$, for which the measure of every ball is positive and finite.
The {\em doubling} property means that there exists a constant $c_D>0$, called  {\em the doubling constant}, such that
\begin{equation}\label{doubling measure}
\mu(B(x,2r))\le c_D\mu(B(x,r))
\end{equation}
for every ball $B(x,r)=\{y\in X:d(y,x)<r\}$, where $x\in X$ and $r>0$.

We assume that $X$ has the \emph{nonempty spheres property}, that is, for every $x\in X$ and $r>0$, the set $\{y\in X: d(x,y)=r\}$ is nonempty. This property is needed to prove Poincar\'e type inequality \eqref{median poincare}, and it also enables us to simplify certain pointwise estimates, see Lemma \ref{pointwise lemma} and Remark \ref{medians are better}.

Note that the nonempty spheres property implies that all annuli have positive measure: Let $x\in X$, $r>0$, $0<\eps<r$ and let 
$A=B(x,r)\setminus B(x,r-\eps)$. By the assumption, there is $y$ such that $d(x,y)=r-\eps/2$. 
Now $B_y=B(y,\eps/2)\subset A$ and hence $\mu(A)\ge\mu(B_y)>0$.
 
By $\ch{E}$, we denote the characteristic function of a set $E\subset X$ and by $\rv$, the extended real numbers $[-\infty,\infty]$.
$L^0(X)$ is the set all measurable, almost everywhere finite functions $u\colon X\to\rv$.
In general, $C$ is positive constant whose value is not necessarily the same at each occurrence.
 
The integral average of a locally integrable function $u$ over a set  $A$ of finite and positive measure is denoted by
\[
u_A=\vint{A}u\,d\mu=\frac{1}{\mu(A)}\int_A u\,d\mu.
\]
The Hardy-Littlewood maximal function of $u$ is $\M u\colon X\to\rv$,
\[
\M u(x)=\sup_{r>0}\,\vint{B(x,r)}|u|\,d\mu.
\]

Our important tools are median values, which have been studied and used in different problems of analysis for example in  \cite{Fu}, \cite{GKZ}, \cite{HIT}, \cite{JPW}, \cite{JT}, \cite{Le}, \cite{LP}, \cite{PT}, \cite{St} and \cite{Zh}. 
In the theory of Besov and Triebel--Lizorkin spaces, they are extremely useful when $0<p\le1$ or $0<q\le 1$.
\begin{definition}
Let $0<\gamma\le 1/2$. The $\gamma$-{\em median} of a measurable function $u$ over a set $A$ of finite and positive  measure is
\[
m_u^\gamma(A)=\inf\big\{a\in\re: \mu(\{x\in A: u(x)>a\})< \gamma\mu(A)\big\},
\]
and the $\gamma$-{\em median maximal function} of $u$ is $M_\gamma u\colon X\to\rv$,
\[
M_\gamma u(x)=\sup_{r>0}\,m_{|u|}^\gamma(B(x,r)).
\]
\end{definition}
If $u\in L^0(A)$, then clearly $m_u^\gamma(A)$ is finite.
Note that the parameter $\gamma=1/2$ gives the standard median value of $u$ on $A$. It is denoted shortly by $m_u(A)$.

\subsection{Hajlasz--Besov and Hajlasz--Triebel--Lizorkin spaces}
There are several definitions for Besov and Triebel--Lizorkin spaces in metric measure spaces. 
We use the definitions given by pointwise inequalities in \cite{KYZ}.
The motivation for these definitions comes from the Haj\l asz--Sobolev spaces $M^{s,p}(X)$, defined for $s=1$ in \cite{H} and for fractional scales in \cite{Y}. 
We recall this definition below. For the other definitions for Besov and Triebel--Lizorkin spaces in the metric setting, see 
\cite{GKS}, \cite{GKZ}, \cite{HMY}, \cite{KYZ}, \cite{MY}, \cite{SYY}, \cite{YZ} and the references therein.

\begin{definition}
Let $s> 0$ and let $0<p<\infty$.
A nonnegative measurable function $g$ is an  {\em $s$-gradient} of a measurable function $u$, if 
\begin{equation}\label{eq: gradient}
|u(x)-u(y)|\le d(x,y)^s(g(x)+g(y))
\end{equation}
for all $x,y\in X\setminus E$, where $E$ is a set with $\mu(E)=0$. 
The {\em Haj\l asz space} $M^{s,p}(X)$ consists of measurable functions $u\in L^p(X)$ having an $s$-gradient in $L^p(X)$ and it is equipped with a norm (a quasinorm when $0<p<1$)
\[
\|u\|_{M^{s,p}(X)}=\|u\|_{L^p(X)}+\inf\|g\|_{L^p(X)},
\]
where the infimum is taken over all $s$-gradients of $u$.
\end{definition}

\begin{definition}
Let $0<s<\infty$.
A sequence of nonnegative measurable functions $(g_k)_{k\in\z}$ is a  {\em fractional $s$-gradient} of a measurable function
$u\colon X\to\rv$, if there exists a set $E\subset X$ with $\mu(E)=0$ such that
\[
|u(x)-u(y)|\le d(x,y)^s\big(g_k(x)+g_k(y)\big)
\]
for all $k\in\z$ and all $x,y\in X\setminus E$ satisfying $2^{-k-1}\le d(x,y)<2^{-k}$.
The collection of fractional $s$-gradients of $u$ is denoted by $\D^s(u)$.
\end{definition}

For $0<p,q\le\infty$ and a sequence $(f_k)_{k\in\z}$ of measurable functions, 
define
\[
\big\|(f_k)_{k\in\z}\big\|_{l^q(L^p(X))}
=\big\|\big(\|f_k\|_{L^p(X)}\big)_{k\in\z}\big\|_{l^q}
\]
and
\[
\big\|(f_k)_{k\in\z}\big\|_{L^p(X;\,l^q)}
=\big\|\|(f_k)_{k\in\z}\|_{l^q}\big\|_{L^p(X)},
\]
where
\[
\big\|(f_k)_{k\in\z}\big\|_{l^{q}}
=
\begin{cases}
\big(\sum_{k\in\z}|f_{k}|^{q}\big)^{1/q},&\quad\text{when }0<q<\infty, \\
\;\sup_{k\in\z}|f_{k}|,&\quad\text{when }q=\infty.
\end{cases}
\]

\begin{definition}
Let $0<s<\infty$ and $0<p,q\le\infty$.
The  {\em homogeneous Haj\l asz--Besov space} $\dot N_{p,q}^s(X)$ consists of measurable functions
$u\colon X\to\rv$, for which  the (semi)norm
\[
\|u\|_{\dot N_{p,q}^s(X)}=\inf_{(g_k)\in\D^s(u)}\|(g_k)\|_{l^q(L^p(X))}
\]
is finite, and the {\em (inhomogeneous) Haj\l asz--Besov space} $N_{p,q}^s(X)$ is $\dot N_{p,q}^s(X)\cap L^p(X)$
equipped with the norm
\[
\|u\|_{N_{p,q}^s(X)}=\|u\|_{L^p(X)}+\|u\|_{\dot N_{p,q}^s(X)}.
\]
Similarly, the {\em homogeneous Haj\l asz--Triebel--Lizorkin space} $\dot M_{p,q}^s(X)$ consists of measurable functions
$u\colon X\to\rv$, for which
\[
\|u\|_{\dot M_{p,q}^s(X)}
=\inf_{(g_k)\in\D^s(u)}\|(g_k)\|_{L^p(X;\,l^q)}
\]
is finite and the {\em Haj\l asz--Triebel--Lizorkin space} $M_{p,q}^s(X)$ is $\dot M_{p,q}^s(X)\cap L^p(X)$
equipped with the norm
\[
\|u\|_{M_{p,q}^s(X)}=\|u\|_{L^p(X)}+\|u\|_{\dot M_{p,q}^s(X)}.
\]
\end{definition}
When $0<p<1$, the (semi)norms defined above are actually quasi-(semi)norms, but for simplicity we call them, as well as other quasi-seminorms in this paper, just norms.

Note that, by the Aoki--Rolewicz Theorem, \cite{Ao}, \cite{Ro}, for each quasinorm $\|\cdot\|$, there is a comparable quasinorm 
$\||\cdot|\|$ and $0<r<1$ such that $\||u+v|\|^r\le \||u|\|^r+\||v|\|^r$ for each $u$ and $v$ in the quasinormed space.
Hence, if $0<p<1$ or $0<q<1$, the triangle inequality for the quasinorm does not hold but there are constants $0<r<1$ and $c>0$ such that  
\begin{equation}\label{quasi norm ie}
\Big\|\sum_{i=1}^\infty u_i\Big\|_{N^s_{p,q}(X)}^r\le c\sum_{i=1}^\infty\|u_i\|_{N^s_{p,q}(X)}^r
\end{equation}
whenever $u_i\in N^s_{p,q}(X)$. A corresponding result holds for Triebel--Lizorkin functions.

\subsection {On different definitions of Besov and Triebel--Lizorkin spaces}
The space $M^s_{p,q}(\rn)$ coincides with Triebel--Lizor\-kin space ${\bf F}^s_{p,q}(\rn)$, 
defined via the Fou\-rier analytic approach, when $s\in (0,1)$, $p\in(n/(n+s),\infty)$ and $q\in(n/(n+s),\infty]$, and
$M^1_{p,\infty}(\rn)=M^{1,p}(\rn)={\bf F}^{1}_{p,2}(\rn)$, when $p\in(n/(n+1),\infty)$.
Similarly, $N_{p,q}^s(\rn)$ coincides with Besov space ${\bf B}^s_{p,q}(\rn)$ for $s\in (0,1)$, $p\in(n/(n+s),\infty)$ and $q
\in(0,\infty]$, see \cite[Thm 1.2 and Remark 3.3]{KYZ}. For the definitions of ${\bf F}^s_{p,q}(\rn)$ and
${\bf B}^s_{p,q}(\rn)$, we refer to \cite{Tr}, \cite{Tr2}, \cite[Section 3]{KYZ}.
In the metric setting, $M^{s,p}(X)$ coincides with the Haj\l asz--Triebel--Lizorkin space $M_{p,\infty}^s(X)$, see \cite[Prop.\ 2.1]{KYZ} for a simple proof.

\subsection{Netrusov--Hausdorff content}
While studying the relation of Besov capacities and Hausdorff contents (sometimes called a classification problem), and Luzin type results for Besov functions, Netrusov used a modified version of the classical Hausdorff content in \cite{Ne92}, \cite{Ne96}. 
This content is used also for example in \cite{A2} and \cite{HN}. 
We use a slightly modified version where, instead of summing the powers of radii of the balls of the covering, we sum the measures of the balls divided by a power of the radii. This type of modification is natural in doubling metric spaces since the dimension of the space is not necessarily constant, not even locally. 

\begin{definition}\label{HN content}
Let $0\le d<\infty$, $0<\theta<\infty$  and $0<R<\infty$. 
The {\em Netrusov--Hausdorff content of codimension $d$} of a set $E\subset X$ is
\[
\cH^{d,\theta}_R
=\inf\Bigg[\sum_{i:2^{-i}\le R}\bigg(\sum_{j\in I_i} \frac{\mu(B(x_j,r_j))}{r_j^d} \bigg)^{\theta}\Bigg]^{1/\theta},
\]
where the infimum is taken over all coverings $\{B(x_j,r_j)\}$ of $E$ with $0<r_j\le R$ and $I_i=\{j: 2^{-i}\le r_j<2^{-i+1}\}$.

When $R=\infty$, the infimum is taken over all coverings of $E$ and the first sum is over $i\in \z$.
\end{definition}

Note that if measure $\mu$ is $Q$-regular, which means that the measure of each ball $B(x,r)$ is comparable with $r^Q$, then
$\cH^{d,\theta}_R$ is comparable with the $(Q-d)$-dimensional Netrusov--Hausdorff content defined using the powers of radii. 

A similar modification of the Hausdorff content, the {\em Hausdorff content of codimension $d$}, $0<d<\infty$, 
\[
\cH^{d}_R(E)=\inf\bigg\{\sum_{i=1}^{\infty} \frac{\mu(B(x_i,r_i))}{r_i^d}\bigg\},
\]
where $0<R<\infty$, and the infimum is taken over all coverings $\{B(x_i,r_i)\}$ of $E$ satisfying $r_i\le R$ for all $i$, has been used for example in the theory of BV-functions in metric spaces starting from \cite{Am}.  
When $R=\infty$, the infimum is taken over all coverings $\{B(x_i,r_i)\}$ of $E$, and the corresponding Hausdoff measure of codimension $d$ is
\[
\cH^d(E)=\lim_{R\to0}\cH^d_R(E).
\] 
Note that $\cH^{d,1}_R(E)=\cH^{d}_R(E)$ and by \eqref{a sum}, $\cH^{d,\theta}_R(E)\le \cH^{d}_R(E)$ if $\theta>1$ and 
$\cH^{d,\theta}_R(E)\ge \cH^{d}_R(E)$ if $\theta<1$.

\section{Lemmas}\label{lemmas}
This section contains lemmas needed in the proof of the main result
and new Poinc\'are type inequalities for $\gamma$-medians.

We start with an elementary inequality.
If $a_i\ge 0$ for all $i\in\z$ and $0<r\le 1$, then
\begin{equation}\label{a sum}
\Big(\sum _{i\in\z}a_i\Big)^r\le\sum_{i\in\z} a_i^r.
\end{equation}

The following two lemmas say that sets with small Netrusov--Hausdorff content have also small measure and 
that the content satisfies an Aoki--Rolewicz type estimate for unions even though it is not necessarily subadditive.

\begin{lemma}\label{measure and NH content}
Let $0<d,\theta,R<\infty$.
There is a constant $C>0$ such that 
 \[
 \mu(E)\le C\cH^{d,\theta}_R(E)
 \] 
for each measurable $E\subset X$. The claim holds also if $d=0$, $0<\theta\le1$ and $0<R\le\infty$.
\end{lemma}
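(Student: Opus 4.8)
The plan is to bound $\mu(E)$ by summing measures of balls in an efficient cover. First I would fix a measurable set $E\subset X$ and let $\{B(x_j,r_j)\}$ be any admissible cover of $E$ with $0<r_j\le R$, organized into the layers $I_i=\{j:2^{-i}\le r_j<2^{-i+1}\}$ for those $i$ with $2^{-i}\le R$. Since $E\subset\bigcup_j B(x_j,r_j)$, subadditivity and monotonicity of $\mu$ give
\[
\mu(E)\le\sum_j\mu(B(x_j,r_j))=\sum_{i:2^{-i}\le R}\ \sum_{j\in I_i}\mu(B(x_j,r_j)).
\]
On each layer $I_i$ the radii are comparable to $2^{-i}$, so $\mu(B(x_j,r_j))=r_j^d\cdot\tfrac{\mu(B(x_j,r_j))}{r_j^d}\le (2^{-i+1})^d\,\tfrac{\mu(B(x_j,r_j))}{r_j^d}$, and hence
\[
\mu(E)\le\sum_{i:2^{-i}\le R}(2^{-i+1})^d\sum_{j\in I_i}\frac{\mu(B(x_j,r_j))}{r_j^d}.
\]
Writing $S_i:=\sum_{j\in I_i}\mu(B(x_j,r_j))/r_j^d$, this is $\mu(E)\le 2^d\sum_{i:2^{-i}\le R}2^{-id}S_i$.

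Next I would pass from $\sum 2^{-id}S_i$ to $\bigl(\sum S_i^\theta\bigr)^{1/\theta}$, which is exactly the quantity appearing inside the infimum defining $\cH^{d,\theta}_R(E)$. When $\theta\le 1$ one uses \eqref{a sum} together with the bound $2^{-id}\le(2^{-i_0})^d\le R^d$, where $i_0$ is the smallest index with $2^{-i_0}\le R$: indeed $\sum_i 2^{-id}S_i\le R^d\sum_i S_i\le R^d\bigl(\sum_i S_i^\theta\bigr)^{1/\theta}$. When $\theta>1$ one instead applies Hölder's inequality with exponents $\theta$ and $\theta'=\theta/(\theta-1)$ to the sum $\sum_i 2^{-id}S_i=\sum_i(2^{-id})\cdot S_i$, obtaining $\sum_i 2^{-id}S_i\le\bigl(\sum_i 2^{-i d\theta'}\bigr)^{1/\theta'}\bigl(\sum_i S_i^\theta\bigr)^{1/\theta}$; since $d>0$ the geometric series $\sum_{i:2^{-i}\le R}2^{-id\theta'}$ converges to a finite constant depending only on $d,\theta,R$. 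Either way,
\[
\mu(E)\le C(d,\theta,R)\Bigl(\sum_{i:2^{-i}\le R}S_i^\theta\Bigr)^{1/\theta},
\]
and taking the infimum over all admissible covers yields $\mu(E)\le C\,\cH^{d,\theta}_R(E)$.

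For the final sentence, when $d=0$, $0<\theta\le1$ and $0<R\le\infty$, the factor $2^{-id}=1$, so directly $\mu(E)\le\sum_i S_i\le\bigl(\sum_i S_i^\theta\bigr)^{1/\theta}$ by \eqref{a sum}, with constant $1$; this also covers the $R=\infty$ case where the sum runs over all $i\in\z$. The only genuinely delicate point is making sure the constant stays finite in the $\theta>1$ case — that is where the restriction $d>0$ (or, alternatively, $d=0$ with $\theta\le1$) is used, since for $d=0$ the series $\sum_{i:2^{-i}\le R}1$ diverges when $R$ bounds infinitely many layers. Everything else is routine: subadditivity of the outer measure, layerwise comparability of radii, and a one-line summation estimate.
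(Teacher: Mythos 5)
Your proof is correct and follows essentially the same route as the paper: bound $\mu(E)$ by the layered sum $\sum_i 2^{-id}S_i$ and pass to the $\ell^\theta$ norm of $(S_i)$ via \eqref{a sum} when $\theta\le1$ and via H\"older's inequality when $\theta>1$. Your write-up just makes explicit the geometric-series constant in the H\"older step that the paper leaves implicit, and your remark on why $d>0$ is needed there is accurate.
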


\begin{proof}
We prove only the case $0<d,\theta,R<\infty$, the proof for the other case is similar. 
Let $\{B_j\}$ be a covering of $E$ by balls of radii $0<r_j\le R$. Then
\[
\mu(E)
\le \sum_{j=1}^\infty\mu(B_j)
\le \sum_{i:2^{-i}\le R}2^{(-i+1)d}\sum_{j\in I_i}\frac{\mu(B_j)}{r_j^d}.
\]
Hence
\[
\mu(E)\le CR^d\bigg(\sum_{i:2^{-i}\le R}\Big(\sum_{j\in I_i}\frac{\mu(B_j)}{r_j^d}\Big)^\theta\bigg)^{1/\theta}
\]
by \eqref{a sum} when $0<\theta\le 1$, and by the H\"older inequality for series when $\theta\ge 1$. 
The claim follows by taking infimum over coverings of $E$.
\end{proof}

\begin{lemma}\label{NH content subadd}
Let $0\le d<\infty$, $0<\theta<\infty$ and $0<R\le\infty$. 
Then, for all sets $E_k$, $k\in\n$ and for $r=\min\{1,\theta\}$,
\begin{equation}
\cH^{d,\theta}_R\Big(\bigcup_{k\in\n}E_k\Big)^r
\le \sum_{k\in\n}\cH^{d,\theta}_R(E_k)^{r}.
\end{equation}
\end{lemma}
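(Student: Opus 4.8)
The plan is to produce a near-optimal covering of $\bigcup_{k\in\n}E_k$ by concatenating near-optimal coverings of the individual sets $E_k$, and then to control the resulting scale sums by the triangle inequality in $l^\theta$ when $\theta\ge1$ and by the elementary inequality \eqref{a sum} when $\theta\le1$. First I would dispose of the trivial case: if $\sum_{k\in\n}\cH^{d,\theta}_R(E_k)^r=\infty$ there is nothing to prove, so we may assume each $\cH^{d,\theta}_R(E_k)$ is finite. Fix $\eps>0$ and, for each $k$, choose a covering $\{B^k_j\}_j$ of $E_k$ by balls of radii $0<r^k_j\le R$ such that, setting $I^k_i=\{j:2^{-i}\le r^k_j<2^{-i+1}\}$ and $a_{i,k}=\sum_{j\in I^k_i}\mu(B^k_j)/(r^k_j)^d$, one has $\big(\sum_{i:2^{-i}\le R}a_{i,k}^\theta\big)^{1/\theta}\le\cH^{d,\theta}_R(E_k)+\eps\,2^{-k}$ (with the sum over all $i\in\z$ in the case $R=\infty$).

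The key observation is that the merged collection $\{B^k_j\}_{k,j}$ is a covering of $\bigcup_{k\in\n}E_k$ by balls of radii at most $R$, and for each scale $i$ the balls of the merged collection having radius in $[2^{-i},2^{-i+1})$ are exactly those contributed by the individual coverings; hence the scale-$i$ sum of the merged covering equals $\sum_{k\in\n}a_{i,k}$. Plugging this covering into Definition \ref{HN content} gives
\[
\cH^{d,\theta}_R\Big(\bigcup_{k\in\n}E_k\Big)\le\Big(\sum_{i:2^{-i}\le R}\Big(\sum_{k\in\n}a_{i,k}\Big)^\theta\Big)^{1/\theta}=\Big\|\Big(\sum_{k\in\n}a_{i,k}\Big)_i\Big\|_{l^\theta}.
\]
When $\theta\ge1$ (so $r=1$), Minkowski's inequality in $l^\theta$ bounds the right-hand side by $\sum_{k\in\n}\big\|(a_{i,k})_i\big\|_{l^\theta}\le\sum_{k\in\n}\cH^{d,\theta}_R(E_k)+\eps$, and letting $\eps\to0$ finishes this case. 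When $\theta\le1$ (so $r=\theta$), I would raise the displayed estimate to the power $\theta$, apply \eqref{a sum} to the inner sum with exponent $\theta$, interchange the order of summation, and use $(a+b)^\theta\le a^\theta+b^\theta$ to get
\[
\cH^{d,\theta}_R\Big(\bigcup_{k\in\n}E_k\Big)^\theta\le\sum_{k\in\n}\sum_{i:2^{-i}\le R}a_{i,k}^\theta\le\sum_{k\in\n}\big(\cH^{d,\theta}_R(E_k)+\eps\,2^{-k}\big)^\theta\le\sum_{k\in\n}\cH^{d,\theta}_R(E_k)^\theta+\eps^\theta\sum_{k\in\n}2^{-k\theta},
\]
and again letting $\eps\to0$ yields the claim.

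I do not expect a serious obstacle here; the content is to be read as an Aoki--Rolewicz type substitute for the (missing) subadditivity of $\cH^{d,\theta}_R$. The only points requiring a little care are bookkeeping rather than substance: checking that merging the coverings preserves the grouping of balls into dyadic radius scales $I_i$, handling the case $R=\infty$ in parallel, and splitting into the ranges $\theta\ge1$ and $\theta\le1$ so that the correct inequality ($l^\theta$ triangle inequality, resp. concavity of $t\mapsto t^\theta$) is available.
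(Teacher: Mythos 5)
Your proof is correct and follows essentially the same route as the paper: merge near-optimal coverings of the $E_k$ and then apply the $r$-th-power triangle inequality in $l^\theta$ to the resulting scale sums. The only cosmetic difference is that the paper packages your two cases ($\theta\ge 1$ via Minkowski, $\theta\le 1$ via \eqref{a sum}) into the single statement $\|\sum_k(a^k_i)_i\|_{l^\theta}^r\le\sum_k\|(a^k_i)_i\|_{l^\theta}^r$ and absorbs the $\eps$ at the level of the $r$-th powers of the contents.
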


\begin{proof}
Let $E=\cup_{k=1}^\infty E_k$. Let $\eps>0$.
For every $k$, let $B_{kj}$, $j\in\n$, be balls with radii $0<r_{kj}\le R$ such that $E_k\subset\cup_{j=1}^\infty B_{kj}$ and 
\[
\bigg(\sum_{i:2^{-i}\le R}\bigg(\sum_{j\in I^k_i} \frac{\mu(B_{kj})}{r_{kj}^d} \bigg)^{\theta}\bigg)^{r/\theta}
<\cH^{d,\theta}_R(E_k)^r+2^{-k}\eps,
\]
where $I^k_i=\{j: 2^{-i}\le r_{kj}<2^{-i+1}\}$. Then $\{B_{kj}: j,k\in\n\}$ is a covering of $E$
and so
\[
\begin{split}
\cH^{d,\theta}_R(E)^r
&\le \bigg(\sum_{i:2^{-i}\le R}\bigg(\sum_{k\in \n}\sum_{j\in I^k_i} \frac{\mu(B_{kj})}{r_{kj}^d} \bigg)^{\theta}\bigg)^{r/\theta}\\
&\le \sum_{k\in \n}\bigg(\sum_{i:2^{-i}\le R}\bigg(\sum_{j\in I^k_i} \frac{\mu(B_{kj})}{r_{kj}^d} \bigg)^{\theta}\bigg)^{r/\theta}\\
&\le \sum_{k\in\n}\cH^{d,\theta}_R(E_k)^r+\eps,
\end{split}
\]
where the second estimate comes from the fact that 
\[
\|\sum_{k\in\n}(a^k_i)_{i\in\z}\|_{l^\theta}^r
\le \sum_{k\in\n}\|(a^k_i)_{i\in\z}\|_{l^\theta}^r
\] 
for all $(a^k_i)_{i\in\z}\in l^\theta$, $k\in\n$. The claim follows by letting $\eps\to 0$.
\end{proof}

The following lemma is easy to prove using the H\"older inequality for series when $b\ge1$ and \eqref{a sum} when $0<b<1$. 
We need it while estimating the norms of fractional gradients. 

\begin{lemma}[\cite{HIT}, Lemma 3.1]\label{summing lemma}
Let $1<a<\infty$, $0<b<\infty$ and $c_k\ge 0$, $k\in\z$. There is a constant $C=C(a,b)$ such that
\[
\sum_{k\in\z}\Big(\sum_{j\in\z}a^{-|j-k|}c_j\Big)^b\le C\sum_{j\in\z}c_j^b.
\]
\end{lemma}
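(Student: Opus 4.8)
The statement to prove is Lemma~\ref{summing lemma}: for $1<a<\infty$, $0<b<\infty$ and $c_k \geq 0$,
\[
\sum_{k\in\z}\Big(\sum_{j\in\z}a^{-|j-k|}c_j\Big)^b \leq C\sum_{j\in\z}c_j^b.
\]

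The plan is to split into two cases according to whether $b \geq 1$ or $0<b<1$, exactly as the remark preceding the lemma suggests.

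\textbf{Case $b \geq 1$.} Here I would use H\"older's inequality for series (or a discrete Minkowski / convexity argument) applied to the inner sum, treating $a^{-|j-k|}$ as weights. Write the inner sum as $\sum_j a^{-|j-k|/b'} \cdot a^{-|j-k|/b} c_j$ where $b'$ is the conjugate exponent of $b$. Then H\"older gives
\[
\Big(\sum_j a^{-|j-k|} c_j\Big)^b \leq \Big(\sum_j a^{-|j-k|}\Big)^{b/b'} \sum_j a^{-|j-k|} c_j^b.
\]
Since $\sum_j a^{-|j-k|} = \frac{1+a^{-1}}{1-a^{-1}} =: S_a < \infty$ is a constant independent of $k$, summing over $k$ and exchanging the order of summation yields
\[
\sum_k \Big(\sum_j a^{-|j-k|} c_j\Big)^b \leq S_a^{b/b'} \sum_j c_j^b \sum_k a^{-|j-k|} = S_a^{b/b'+1}\sum_j c_j^b,
\]
so $C = S_a^{b}$ works (using $b/b'+1 = b$). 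This is the clean, routine direction.

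\textbf{Case $0<b<1$.} Here H\"older fails, so instead I would invoke the elementary inequality \eqref{a sum}, which states $\big(\sum_i x_i\big)^b \leq \sum_i x_i^b$ for $x_i \geq 0$ and $0<b\leq 1$. Applying it to the inner sum gives $\big(\sum_j a^{-|j-k|}c_j\big)^b \leq \sum_j a^{-b|j-k|}c_j^b$. Now sum over $k$, interchange the order of summation, and note that since $a>1$ and $b>0$ we have $a^b>1$, so $\sum_k (a^b)^{-|j-k|} = \frac{a^b+1}{a^b-1} =: S_{a^b} < \infty$ is again a constant independent of $j$. Hence
\[
\sum_k \Big(\sum_j a^{-|j-k|}c_j\Big)^b \leq \sum_j c_j^b \sum_k a^{-b|j-k|} = S_{a^b}\sum_j c_j^b,
\]
so $C = S_{a^b}$ works. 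Combining the two cases, one may take $C = C(a,b)$ to be the maximum (or just quote each case's constant), which completes the proof. I do not anticipate a genuine obstacle here; the only point requiring a little care is getting the exponent bookkeeping right in the H\"older step of the first case (that $b/b' + 1 = b$) and observing in the second case that $b>0$ suffices to make $a^b>1$ so the geometric series converges — the full strength $b<1$ is used only to license \eqref{a sum}.
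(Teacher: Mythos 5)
Your proof is correct and follows exactly the route the paper indicates (the paper only sketches it, citing H\"older's inequality for $b\ge 1$ and inequality \eqref{a sum} for $0<b<1$, and defers the details to \cite{HIT}). The exponent bookkeeping $b/b'+1=b$ and the geometric series constants are all right, so nothing further is needed.
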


\begin{lemma}\label{B norm abs cont}
Let $0<p,q<\infty$ and $(g_k)\in l^q(L^p(X))$ or let $0<p<\infty$, $0<q\le \infty$ and $(h_k)\in L^p(l^q(X))$. 
Then $\|(g_k)\|_{l^q(L^p(\cdot))}$ and $\|(h_k)\|_{L^p(l^q(\cdot))}$ are absolutely continuous with respect to measure $\mu$. 
\end{lemma}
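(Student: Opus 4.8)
The plan is to show that the tail of the relevant sum/integral, restricted to a set of small measure, is small --- uniformly in nothing, but after first truncating the sum over $k$ to a finite range. I would treat the two cases separately since the outer structures ($l^q$ of $L^p$ versus $L^p$ of $l^q$) behave differently.

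First consider the Triebel--Lizorkin-type quantity $\|(h_k)\|_{L^p(X;l^q)}$ with $(h_k)\in L^p(X;l^q)$, i.e. the function $H=\|(h_k)_{k\in\z}\|_{l^q}$ lies in $L^p(X)$. Then for any measurable $A\subset X$ one has $\|(h_k\ch{A})\|_{L^p(X;l^q)}=\|H\ch{A}\|_{L^p(X)}$, and absolute continuity of $\mu\mapsto\|H\ch{A}\|_{L^p(X)}^p=\int_A H^p\,d\mu$ with respect to $\mu$ is the standard absolute continuity of the integral of the fixed $L^1$ function $H^p$. This case is essentially immediate once one unwinds the definitions, so I would dispatch it first in one or two lines.

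For the Besov-type quantity, fix $(g_k)\in l^q(L^p(X))$, so $\sum_{k\in\z}\|g_k\|_{L^p(X)}^q<\infty$. Given $\eta>0$, choose a finite set $F\subset\z$ with $\sum_{k\notin F}\|g_k\|_{L^p(X)}^q<\eta$. For a measurable set $A$,
\[
\big\|(g_k\ch{A})_{k\in\z}\big\|_{l^q(L^p(X))}^q
=\sum_{k\in F}\|g_k\ch{A}\|_{L^p(X)}^q+\sum_{k\notin F}\|g_k\ch{A}\|_{L^p(X)}^q
\le\sum_{k\in F}\Big(\int_A g_k^p\,d\mu\Big)^{q/p}+\eta .
\]
Now for each of the finitely many $k\in F$, absolute continuity of the integral gives $\delta_k>0$ with $\int_A g_k^p\,d\mu<\eta/|F|$ whenever $\mu(A)<\delta_k$ (when $p\ge 1$ raise to the power $q/p$ directly; when $p<1$ note $x\mapsto x^{q/p}$ is still continuous at $0$, or simply bound $\big(\int_A g_k^p\big)^{q/p}$ using that $\int_A g_k^p\to 0$). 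Taking $\delta=\min_{k\in F}\delta_k$, we get $\|(g_k\ch{A})\|_{l^q(L^p(X))}^q<2\eta$ whenever $\mu(A)<\delta$, which is the claim for this case.

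The only mildly delicate point --- and the one I would be most careful about --- is the interaction of the exponents when $p<1$ or $q<1$: one must not assume the triangle inequality, and one should phrase everything in terms of the (finite) sums $\sum\|\cdot\|_{L^p}^q$ and the integrals $\int_A g_k^p\,d\mu$ directly, using monotone/dominated convergence (equivalently, absolute continuity of the integral for a fixed nonnegative $L^1$ function) rather than any normed-space estimate. The truncation to a finite $F$ is what makes the finitely-many applications of absolute continuity of the integral legitimate; without it one cannot choose a single $\delta$. Everything else is routine.
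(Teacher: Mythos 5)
Your proof is correct and follows essentially the same route as the paper's: split the sum over $k$ into a finite block (handled by the absolute continuity of each $\|g_k\|_{L^p}$) and a tail (small by convergence of $\sum_k\|g_k\|_{L^p}^q$), and reduce the $L^p(l^q)$ case to absolute continuity of the $L^p$-norm of the single function $\|(h_k)\|_{l^q}$. Your extra care with the exponents when $p<1$ or $q<1$ is a welcome clarification of a point the paper glosses over with the constant $C$ in its final estimate.
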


\begin{proof}
Let $\eps>0$.
Let $K\in\n$ be such that $\big(\sum_{|k|>K}\|g_k\|_{L^p(X)}^q\big)^{1/q}<\eps$.
By the absolute continuity of the $L^p$-norm,  there exists $\delta>0$ such that 
$\big(\sum_{|k|\le K}\|g_k\|_{L^p(A)}^q\big)^{1/q}<\eps$,
whenever $\mu(A)<\delta$. Hence, for such sets $A$ 
\[
\|(g_k)\|_{l^q(L^p(A))}=\Big(\sum_{k=-\infty}^\infty\|g_k\|_{L^p(A)}^q\Big)^{1/q}<C\eps, 
\]
from which the claim for $\|(g_k)\|_{l^q(L^p(\cdot))}$ follows.
For $\|(h_k)\|_{L^p(l^q(\cdot))}$, the claim follows by the absolute continuity of the $L^p$-norm.
\end{proof}

The next lemma contains basic properties of $\gamma$-medians. 
We leave the quite straightforward proof for the reader, who can also look at \cite{PT}.

\begin{lemma}\label{median lemma}
Let $A\subset X$ be a set with $\mu(A)<\infty$. 
Let $u,v\in L^0(A)$ and let $0<\gamma\le1/2$. The $\gamma$-median has the following properties:
\begin{enumerate}
\item\label{med subset}  If $A\subset B$ and there is $c>0$ such that $\mu(B)\le c\mu(A)$, 
then $m_{u}^{\gamma}(A)\le m_{u}^{\gamma/c}(B)$.
\item\label{med uv} If $u\le v$ almost everywhere in $A$, then $m_{u}^{\gamma}(A)\le m_{v}^{\gamma}(A)$.
\item\label{med gammas} If $0<\gamma_1\le\gamma_2\le1/2$, then $m_{u}^{\gamma_2}(A)\le m_{u}^{\gamma_1}(A)$.
\item\label{med constant} $m_u^\gamma(A)+c=m_{u+c}^\gamma(A)$ for each $c\in\re$.
\item\label{med abs}  $|m_{u}^\gamma(A)|\le m_{|u|}^{\gamma}(A)$. 
\item $m_{u+v}^\gamma(A)\le m_{u}^{\gamma/2}(A)+m_{v}^{\gamma/2}(A)$.
\item\label{med int} If $p>0$ and $u\in L^p(A)$, then
\[
m_{|u|}^\gamma(A)\le \Big(\gamma^{-1}\vint{A}|u|^p\,d\mu\Big)^{1/p}.
\]
\item\label{med leb point} $\lim_{r\to 0}m_{u}^\gamma(B(x,r))=u(x)$
for almost every $x\in X$.
\end{enumerate}
\end{lemma}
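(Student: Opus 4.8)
The plan is to obtain all eight properties directly from the definition $m_u^\gamma(A)=\inf\{a\in\re:\mu(\{x\in A:u(x)>a\})<\gamma\mu(A)\}$, the only ingredients being monotonicity of $\mu$ and, at exactly two places (items \eqref{med abs} and \eqref{med leb point}), the hypothesis $\gamma\le1/2$. It is convenient to record first the trivial observation that, writing $E_u(a)=\{x\in A:u(x)>a\}$, the set $S_u:=\{a\in\re:\mu(E_u(a))<\gamma\mu(A)\}$ is an interval unbounded to the right with $\inf S_u=m_u^\gamma(A)$; in particular $a\in S_u$ whenever $a>m_u^\gamma(A)$, and $S_u\subset S_v$ implies $m_v^\gamma(A)\le m_u^\gamma(A)$.

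Given this, items \eqref{med subset}--\eqref{med int} are short. For \eqref{med subset}, take $a>m_u^{\gamma/c}(B)$; then $\mu(\{x\in A:u(x)>a\})\le\mu(\{x\in B:u(x)>a\})<(\gamma/c)\mu(B)\le\gamma\mu(A)$, so $a\ge m_u^\gamma(A)$, and one lets $a\downarrow m_u^{\gamma/c}(B)$. Item \eqref{med uv} holds because $u\le v$ a.e.\ gives $\mu(E_u(a))\le\mu(E_v(a))$, hence $S_v\subset S_u$; item \eqref{med gammas} is the inclusion $\{a:\mu(E_u(a))<\gamma_1\mu(A)\}\subset\{a:\mu(E_u(a))<\gamma_2\mu(A)\}$; item \eqref{med constant} is the identity $E_{u+c}(a)=E_u(a-c)$; item (6) follows from $\{u+v>a+b\}\subset\{u>a\}\cup\{v>b\}$ applied with $a>m_u^{\gamma/2}(A)$ and $b>m_v^{\gamma/2}(A)$; and item \eqref{med int} is Chebyshev's inequality: if $t=\big(\gamma^{-1}\vint{A}|u|^p\,d\mu\big)^{1/p}$ and $a>t$, then $\mu(\{|u|>a\})\le a^{-p}\int_A|u|^p\,d\mu<t^{-p}\int_A|u|^p\,d\mu=\gamma\mu(A)$, so $a\ge m_{|u|}^\gamma(A)$.

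For item \eqref{med abs} I would combine two facts. First, \eqref{med uv} applied to $u\le|u|$ and to $-u\le|u|$ gives $m_u^\gamma(A)\le m_{|u|}^\gamma(A)$ and $m_{-u}^\gamma(A)\le m_{|u|}^\gamma(A)$. Second, the ``reflection'' inequality $m_{-u}^\gamma(A)\ge-m_u^\gamma(A)$: from $\{x\in A:-u(x)>a\}=\{x\in A:u(x)<-a\}$ one gets $m_{-u}^\gamma(A)=-\sup\{b\in\re:\mu(\{x\in A:u(x)<b\})<\gamma\mu(A)\}$, so it is enough to check that every such $b$ satisfies $b\le m_u^\gamma(A)$; this is where $\gamma\le1/2$ enters --- if one had $b>a$ with $\mu(\{u<b\})<\gamma\mu(A)$ and $\mu(\{u>a\})<\gamma\mu(A)$, the partition $A=\{u<b\}\cup\{u\ge b\}$ together with $\{u\ge b\}\subset\{u>a\}$ would force $\mu(A)\le\mu(\{u<b\})+\mu(\{u>a\})<2\gamma\mu(A)\le\mu(A)$. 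Combining the three inequalities, $|m_u^\gamma(A)|=\max\{m_u^\gamma(A),-m_u^\gamma(A)\}\le m_{|u|}^\gamma(A)$.

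The one step with real content is \eqref{med leb point}, and the hard part is that $u$ need not be locally integrable, so the Lebesgue differentiation theorem cannot be applied to $u$ itself. I would fix an increasing homeomorphism $\ph\colon\re\to(-1,1)$ (say $\ph(t)=t/(1+|t|)$); then $\ph\circ u$ is bounded and measurable, hence in $\lloc X$, so the Lebesgue differentiation theorem in the doubling space $X$ gives, for a.e.\ $x$, that $x$ is a point of approximate continuity of $\ph\circ u$, and since $\ph$ is a homeomorphism and $u(x)$ is finite a.e., the same $x$ is a point of approximate continuity of $u$, i.e.\ $\mu(\{y\in B(x,r):|u(y)-u(x)|>\eps\})/\mu(B(x,r))\to0$ as $r\to0$ for every $\eps>0$. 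Fixing such an $x$ and $\eps>0$: for all small $r$, $\mu(\{y\in B(x,r):u(y)>u(x)+\eps\})<\gamma\mu(B(x,r))$, so $m_u^\gamma(B(x,r))\le u(x)+\eps$; and if $m_u^\gamma(B(x,r))<u(x)-\eps$ held for some such $r$, the two-sets-cover argument from \eqref{med abs} (now using $\mu(\{y\in B(x,r):u(y)<u(x)-\eps\})<\gamma\mu(B(x,r))$ together with $\gamma\le1/2$) would again give $\mu(B(x,r))<\mu(B(x,r))$. Hence $|m_u^\gamma(B(x,r))-u(x)|\le\eps$ for all small $r$, and $\eps\downarrow0$ finishes the proof. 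The main obstacle is thus precisely the non-integrability of $u$ in \eqref{med leb point}, which the homeomorphism reduction removes; everything else is bookkeeping with the half-lines $S_u$ and monotonicity of $\mu$.
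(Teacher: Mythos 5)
Your proof is correct. The paper itself offers no argument for this lemma (it is explicitly "left for the reader", with a pointer to Poelhuis--Torchinsky), so there is nothing to compare against; your write-up supplies exactly the missing details. The two points that genuinely need care are handled properly: the reflection inequality behind \eqref{med abs} and the lower bound in \eqref{med leb point} both rest on the same covering contradiction $\mu(A)\le\mu(\{u<b\})+\mu(\{u>a\})<2\gamma\mu(A)\le\mu(A)$, which is precisely where $\gamma\le 1/2$ is used, and the composition with a bounded homeomorphism correctly reduces \eqref{med leb point} to the Lebesgue differentiation theorem for $L^1_{\mathrm{loc}}$ functions on a doubling space, circumventing the possible non-integrability of $u$. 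The remaining items are, as you say, bookkeeping with the up-sets $S_u$.
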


Property \eqref{med leb point} above says that medians of small balls behave like integral averages of locally integrable functions on Lebesgue points.
Recently, in \cite{HKT}, it was shown that for Haj\l asz--Besov and Haj\l asz--Triebel--Lizorkin functions, the
limit in \eqref{med leb point} exists outside a set of capacity zero.
Note also that if $u\in L^p(A)$, $p>0$, then by properties \eqref{med int} and \eqref{med leb point},
\begin{equation}\label{M iso}
u(x)\le M_\gamma u(x)\le \big(\gamma^{-1}\M u^p(x)\big)^{1/p} 
\end{equation}
for almost all $x\in X$. It follows from \eqref{M iso} and from the Hardy--Littlewood maximal theorem that, for every $p>0$, there exists a constant $C>0$ such that
\begin{equation}\label{Lp boundedness of median maximal op}
\|M_\gamma u\|_{L^p(X)}\le C\|u\|_{L^p(X)}
\end{equation}
for every $u\in L^p(X)$. 
More generally, \eqref{M iso} together with the Fefferman--Stein vector valued maximal theorem, proved in \cite{FS}, \cite{GLY}, \cite{Sa}, implies that, for every $0<p<\infty$ and $0<q\le\infty$, there exists a constant $C>0$ such that
\begin{equation}\label{Lplq boundedness of median maximal op}
\|(M_\gamma u_k)\|_{L^p(X;l^q)}\le C\|(u_k)\|_{L^p(X;l^q)}
\end{equation}
for every $(u_k)\in L^p(X;l^q)$.

\subsection{Poincar\'e type inequalities for medians}
The definition of the fractional $s$-gradient implies the validity of Poincar\'e type inequalities, which can be formulated using integral averages or in terms of medians. The versions for medians are extremely useful for functions that are not necessarily locally integrable. 
For integral versions, see \cite[Lemma 2.1]{KYZ} and \cite[Lemma 2.1]{GKZ}. 

\begin{theorem}\label{mm}
Let $0<\gamma\le 1/2$ and $0<s<\infty$. 
Let $u\in L^0(X)$ and $(g_k)\in \D^s(u)$. 
There exists a constant $C>0$, depending only on $s$ and $c_D$, such that inequality
\begin{equation}\label{median poincare}
\inf_{c\in\re}m_{|u-c|}^{\gamma}(B(x,2^{-i}))
\le C2^{-is}\sum_{k=i-3}^i m_{g_k}^{\gamma/C}(B(x,2^{-i+2}))
\end{equation}
holds for all $x\in X$, $i\in\z$. 
\end{theorem}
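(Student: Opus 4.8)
The plan is to prove \eqref{median poincare} by a telescoping/chaining argument over a sequence of shrinking balls centered at $x$, exploiting the pointwise control of oscillation provided by the fractional $s$-gradient at the relevant scales. Fix $x\in X$ and $i\in\z$, and write $B=B(x,2^{-i})$ and $B'=B(x,2^{-i+2})$. The natural candidate for the constant $c$ is a ``limit'' of medians $m_u^\gamma$ over dyadically shrinking balls $B(x,2^{-j})$ as $j\to\infty$; by Lemma \ref{median lemma}\eqref{med leb point} this limit equals $u(x)$ for a.e.\ $x$, and in any case we may work with $c=m_u^\gamma(B(x,2^{-N}))$ and pass to a limit. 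The key point is to estimate $|m_u^\gamma(B(x,2^{-j}))-m_u^\gamma(B(x,2^{-j-1}))|$ for $j\ge i$: since $B(x,2^{-j-1})\subset B(x,2^{-j})$ and $\mu(B(x,2^{-j}))\le c_D\,\mu(B(x,2^{-j-1}))$ by doubling, property \eqref{med subset} of Lemma \ref{median lemma} lets me compare medians on the two balls after adjusting $\gamma$ by the factor $c_D$.

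The heart of the argument is a single-scale oscillation bound. For $y\in B(x,2^{-j-1})$ with $j\ge i$, the distance $d(x,y)$ lies in $[2^{-k-1},2^{-k})$ for some $k\ge j+1\ge i+1$ (or $y=x$, which is negligible), so the defining inequality of $\D^s(u)$ gives $|u(y)-u(x)|\le d(x,y)^s(g_k(y)+g_k(x))$. This is not quite uniform in $y$ because $k$ varies, so instead I will use the cleaner route: on the annular ball $B(x,2^{-j})$, compare $u$ to the constant $u(x)$ is awkward since $u(x)$ may be ill-behaved, so I compare medians directly. Concretely, for $y,z$ both in $B(x,2^{-j})$ one does not have a two-point bound in general; the right move is that for fixed $j$, and $y$ ranging over $B(x,2^{-j})$, one has $|u(y)-m_u^\gamma(B(x,2^{-j}))|$ controlled in measure by the gradient terms $g_{j-1},g_j,g_{j+1},\dots$ evaluated near $x$. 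Following the integral-average proofs in \cite{KYZ,GKZ}, the clean statement to extract is: $|m_u^\gamma(B(x,2^{-j}))-m_u^\gamma(B(x,2^{-j-1}))|\le C2^{-js}\sum_{k=j-1}^{j+1} m_{g_k}^{\gamma/C}(B(x,2^{-j+1}))$, obtained by noting that on $B(x,2^{-j-1})$ the function $u$ differs from $m_u^\gamma(B(x,2^{-j}))$ by at most $C2^{-js}(g_k(y)+\text{(a median-sized quantity)})$ on a set of measure at least $(1-\gamma')\mu(B(x,2^{-j-1}))$, and then applying the median's monotonicity and subset properties together with Lemma \ref{median lemma}\eqref{med uv},\eqref{med constant}. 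Here the nonempty spheres property is used exactly as flagged in the text: it guarantees that the relevant annuli $B(x,2^{-k})\setminus B(x,2^{-k-1})$ carry positive measure, so the single scale $k$ matching $d(x,y)$ actually occurs and the argument is not vacuous.

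With the single-scale estimate in hand, I sum over $j\ge i$: telescoping gives
\[
\Big|m_u^\gamma(B(x,2^{-N}))-m_u^{\gamma'}(B(x,2^{-i}))\Big|
\le C\sum_{j=i}^{N-1}2^{-js}\sum_{k=j-1}^{j+1} m_{g_k}^{\gamma/C}(B(x,2^{-j+1})),
\]
and since $m_{g_k}^{\gamma/C}(B(x,2^{-j+1}))\le m_{g_k}^{\gamma/C'}(B(x,2^{-i+2}))$ for $j\ge i$ by the subset property, and $\sum_{j\ge i}2^{-js}\le C2^{-is}$, the double sum collapses, after reindexing, to $C2^{-is}\sum_{k=i-1}^{\infty}(\dots)$. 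The appearance of the range $k=i-3,\dots,i$ in the statement (rather than $k\ge i-1$) indicates that the authors use only finitely many gradient scales; I expect this comes from replacing $m_{g_k}^{\gamma/C}(B(x,2^{-j+1}))$ for $k>i$ by a bound in terms of the coarser scales via the gradient inequality again, or more simply from a variant of the chaining that at each step only invokes $g_k$ with $k$ within a bounded distance of $i$ by working with balls $B(x,2^{-i})$ throughout and subdividing, using annuli; I will adopt whichever bookkeeping yields exactly the four consecutive indices $i-3,\ldots,i$ and the radius $2^{-i+2}$. Letting $N\to\infty$ and using Lemma \ref{median lemma}\eqref{med leb point} to identify the limit (or simply keeping $c=m_u^\gamma(B(x,2^{-N}))$ and taking an infimum over $c$) yields \eqref{median poincare}.

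The main obstacle is the passage from the pointwise, scale-by-scale gradient inequality to a clean median oscillation bound between consecutive dyadic balls with controlled loss in the parameter $\gamma$: one must track how $\gamma$ degrades through each application of Lemma \ref{median lemma}\eqref{med subset} (a factor $c_D$ each time) and ensure only finitely many such applications are needed, which is why the final estimate must be organized so that each annular scale is touched a bounded number of times — this is precisely what forces the fixed four-index window $k=i-3,\dots,i$ and the enlarged ball $B(x,2^{-i+2})$, and getting that bookkeeping exactly right (rather than ending with an infinite sum or a $\gamma$ that shrinks geometrically to zero) is the delicate part.
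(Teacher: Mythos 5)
Your chaining strategy has a structural gap that prevents it from yielding the stated inequality, and the paper's proof avoids chaining entirely. The telescoping sum over $j\ge i$ necessarily involves the gradient functions $g_k$ for \emph{all} $k\gtrsim i$, because the oscillation between $B(x,2^{-j})$ and $B(x,2^{-j-1})$ is governed by $g_k$ with $k$ near $j$; after summing you correctly arrive at $C2^{-is}\sum_{k\ge i-1}(\cdots)$, and there is no mechanism to convert medians of $g_k$ for $k\gg i$ into medians of $g_{i-3},\dots,g_i$ (the functions $g_k$ at different scales are unrelated to one another). Your own text concedes this (``I will adopt whichever bookkeeping yields exactly the four consecutive indices'') without supplying that bookkeeping; the same applies to the degradation of $\gamma$ through infinitely many applications of Lemma \ref{median lemma} \eqref{med subset}. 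In addition, the single-scale estimate you posit is itself not proved: for $y,z$ in the same ball $B(x,2^{-j})$ the distance $d(y,z)$ ranges over all of $[0,2^{-j+1})$, so no single-scale pointwise gradient inequality applies, which is exactly the difficulty you flag but do not resolve.

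The missing idea is to compare $u$ on $B(x,2^{-i})$ with its median over the \emph{annulus} $A=B(x,2^{-i+2})\setminus B(x,2^{-i+1})$, in one step and with no iteration. For every $y\in B(x,2^{-i})$ and $z\in A$ one has $2^{-i}\le d(y,z)<2^{-i+3}$, so only the four scales $k=i-3,\dots,i$ of the fractional gradient enter, and with $g=\max_{i-3\le k\le i}g_k$ one gets $|u(z)-u(y)|\le 2^{(-i+3)s}(g(z)+g(y))$ uniformly. Taking $c=m_u^\gamma(A)$ and applying Lemma \ref{median lemma}, first in $z$ over $A$ and then in $y$ over $B(x,2^{-i})$, gives the claim with a single, bounded loss in $\gamma$. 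The nonempty spheres property is used here not merely to ensure the annulus is nonnull but to produce a point $a$ with $d(a,x)=3\cdot2^{-i}$, whence $B(a,2^{-i})\subset A$ and $\mu(A)\ge C\mu(B(x,2^{-i+2}))$; this comparability of measures is what allows $m_g^\gamma(A)\le m_g^{\gamma/C}(B(x,2^{-i+2}))$ via Lemma \ref{median lemma} \eqref{med subset}. Your stated use of the property (that the scale matching $d(x,y)$ ``actually occurs'') is not the relevant one.
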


\begin{proof}
Let $x\in X$ and $i\in \z$.
Let $y\in B(x,2^{-i})$ and let $A=B(x,2^{-i+2})\setminus B(x,2^{-i+1})$.

Let $z\in A$. Then $2^{-i}\le d(z,y)<2^{-i+3}$ and hence
\[
|u(z)-u(y)|\le 2^{(-i+3)s}(g(z)+g(y)),
\] 
where
\[
g=\max_{i-3\le k\le i} g_k.
\]
By the nonempty spheres property, there is a point $a$ such that $d(a,x)=3\cdot 2^{-i}$. Then $B(a,2^{-i})\subset A$ and the doubling property implies that $\mu(A)\ge C\mu(B(x,2^{-i+2}))$.
Using Lemma \ref{median lemma}, we have, for $c=m_u^\gamma(A)$,
\[
\begin{split}
|u(y)-c|&\le m_{|u-u(y)|}^{\gamma}(A)\\
&\le C2^{-is}\big(m_{g}^{\gamma}(A)+g(y)\big)\\
&\le C2^{-is}\big(m_{g}^{\gamma/C}(B(x,2^{-i+2}))+g(y)\big),
\end{split}
\]
and hence
\[
\begin{split}
m_{|u-c|}^\gamma(B(x,2^{-i})) 
&\le C2^{-is}\big(m_{g}^{\gamma/C}(B(x,2^{-i+2}))+m_{g}^\gamma(B(x,2^{-i}))\big)\\
&\le C2^{-is}m_{g}^{\gamma/C}(B(x,2^{-i+2}))\\
&\le C2^{-is}\sum_{k=i-3}^i m_{g_k}^{\gamma/C}(B(x,2^{-i+2})),
\end{split}
\]
from which the claim follows.
\end{proof}

\begin{remark}
Inequalities \eqref{median poincare 1} and \eqref{pw med} mentioned in the introduction follow by similar, but even easier, arguments. 
\end{remark}

\begin{remark}
Recall that for a locally integrable function and a measurable set $A$ with $0<\mu(A)<\infty$, integral average $\vint{A}|u-u_A|\,d\mu$ is comparable with $\inf_{c\in\re}\vint{A}|u-c|\,d\mu$. Using Lemma \ref{median lemma}, it is easy to see that
\[
\inf_{c\in\re}\ m_{|u-c|}^\gamma(A)\le m_{|u-m_u^\gamma(A)|}^\gamma(A)\le 2\inf_{c\in\re}\ m_{|u-c|}^\gamma(A)
\]
for each measurable function $u$ and measurable set $A$ with finite measure.
\end{remark}

\begin{lemma}\label{pointwise lemma} 
Let $0<\gamma\le 1/2$ and $0<s<\infty$. 
Let $u\in L^0(X)$ and $(g_k)\in \D^s(u)$. Then

\begin{enumerate}
\item\label{mB1-mB2}
\[
|m_u^\gamma(B_1)-m_u^\gamma(B_2)|\le 2\inf_{c\in\re}m_{|u-c|}^{\gamma/{c_1}}(B_2)
\]
whenever $B_1$ and $B_2$ are balls such that $B_1\subset B_2$ and $\mu(B_2)\le c_1\mu(B_1)$,

\item\label{ux-mB}
\[
|u(x)-m_u^\gamma(B(y,2^{-i}))|\le C2^{-is}\sum_{k=i-4}^{i-1}M_{\gamma/C}\,{g_k}(x)
\]
for all $y\in X$, $i\in\z$ and for almost all $x\in B(y,2^{-i+1})$,
and 
\item \label{ux-uy}
\[
|u(x)-u(y)|
\le Cd(x,y)^s\sum_{k=i-4}^i\Big(M_{\gamma/C}\,{g_k}(x)+M_{\gamma/C}\,{g_k}(y)\Big)
\]
for almost all $x,y\in X$.
\end{enumerate}
\end{lemma}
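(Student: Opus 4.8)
The plan is to prove the three parts in order, using Theorem \ref{mm} and the basic median properties of Lemma \ref{median lemma} as the only real inputs. For part \eqref{mB1-mB2}, I would fix any $c\in\re$ and write, using properties \eqref{med constant}, \eqref{med abs} and the triangle-type inequality for medians,
\[
|m_u^\gamma(B_1)-m_u^\gamma(B_2)|
\le |m_u^\gamma(B_1)-c|+|c-m_u^\gamma(B_2)|
= |m_{u-c}^\gamma(B_1)|+|m_{u-c}^\gamma(B_2)|
\le m_{|u-c|}^\gamma(B_1)+m_{|u-c|}^\gamma(B_2).
\]
Then, since $B_1\subset B_2$ and $\mu(B_2)\le c_1\mu(B_1)$, property \eqref{med subset} gives $m_{|u-c|}^\gamma(B_1)\le m_{|u-c|}^{\gamma/c_1}(B_2)$, and property \eqref{med gammas} gives $m_{|u-c|}^\gamma(B_2)\le m_{|u-c|}^{\gamma/c_1}(B_2)$; adding these yields the bound $2\,m_{|u-c|}^{\gamma/c_1}(B_2)$, and taking the infimum over $c\in\re$ finishes the part.

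For part \eqref{ux-mB}, fix $y\in X$ and $i\in\z$ and set $B=B(y,2^{-i})$, $B'=B(y,2^{-i+1})$. For almost every $x\in B'$, property \eqref{med leb point} lets me replace $u(x)$ by $\lim_{r\to0} m_u^\gamma(B(x,r))$, so it suffices to control $|m_u^\gamma(B(x,2^{-j}))-m_u^\gamma(B)|$ for large $j$ and then let $j\to\infty$. I would interpolate through the chain of balls $B_j=B(x,2^{-j})$ for $j\ge i$ (note $B_i=B(x,2^{-i})\subset B(y,2^{-i+2})$ since $x\in B'$, and consecutive balls are comparable by doubling), together with the ball $B$: each consecutive step is estimated by part \eqref{mB1-mB2} and then by Theorem \ref{mm}, which bounds $\inf_{c}m_{|u-c|}^{\gamma/C}(B(x,2^{-j}))$ by $C2^{-js}\sum_{k=j-3}^{j}m_{g_k}^{\gamma/C}(B(x,2^{-j+2}))$; the steps involving $B$ itself are handled the same way after noting $B\subset B(x,2^{-i+2})$. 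Summing the geometric series $\sum_{j\ge i}2^{-js}$ and bounding every median $m_{g_k}^{\gamma/C}(B(x,2^{-j+2}))$ by $M_{\gamma/C}g_k(x)$ collapses the double sum into $C2^{-is}\sum_{k\ge i-4}M_{\gamma/C}g_k(x)$; the index range $i-4\le k\le i-1$ comes from tracking which gradients actually appear (the ``$-1$'' because $B$ uses scale $2^{-i}$ so its Poincaré bound involves $g_{i-3},\dots,g_i$, but the exponent makes the $g_i$-type terms from scale-$i$ balls absorbable, while the $x$-centered chain contributes $g_k$ for $k\ge i-3$, and a small shift in the argument of $M_{\gamma/C}$ versus its scale accounts for the exact window). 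I would in practice just carry enough of the chaining to see that finitely many consecutive gradients near scale $2^{-i}$ suffice and the tail is geometric.

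For part \eqref{ux-uy}, I would take $x,y$ in the full-measure set where \eqref{med leb point} holds at $x$ and the defining inequality for $(g_k)\in\D^s(u)$ holds, and choose $i\in\z$ with $2^{-i-1}\le d(x,y)<2^{-i}$. Then $y\in B(x,2^{-i})=:B$ and also $x\in B(y,2^{-i+1})$, so I can write $|u(x)-u(y)|\le |u(x)-m_u^\gamma(B)|+|m_u^\gamma(B)-u(y)|$; the first term is bounded by part \eqref{ux-mB} centered at $x$ (giving $M_{\gamma/C}g_k(x)$ terms), and by symmetry, applying part \eqref{ux-mB} with the roles of $x$ and $y$ interchanged and $B(y,2^{-i})$ in place of $B$ — together with part \eqref{mB1-mB2} to pass between $m_u^\gamma(B(x,2^{-i}))$ and $m_u^\gamma(B(y,2^{-i}))$, which are comparable balls — bounds the second term by $M_{\gamma/C}g_k(y)$ terms. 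Replacing $2^{-i}$ by the comparable quantity $d(x,y)$ gives the stated inequality, after adjusting the constant $C$ and the implicit constant in $\gamma/C$.

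I expect the main obstacle to be part \eqref{ux-mB}: the chaining argument must be set up carefully so that (a) the telescoping uses the right comparability constants at each step, (b) the geometric tail in $2^{-js}$ actually converges and is summed cleanly, and (c) the bookkeeping of which gradient indices appear produces exactly the window $i-4\le k\le i-1$ rather than something larger. Parts \eqref{mB1-mB2} and \eqref{ux-uy} are then essentially formal consequences of \eqref{ux-mB} together with the median calculus of Lemma \ref{median lemma}.
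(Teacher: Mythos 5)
Parts \eqref{mB1-mB2} and \eqref{ux-uy} of your plan are fine and essentially match the paper: \eqref{mB1-mB2} is the triangle inequality plus the median calculus of Lemma \ref{median lemma}, and \eqref{ux-uy} splits $|u(x)-u(y)|$ through medians over the two balls of radius $2^{-i}$ with $2^{-i-1}\le d(x,y)<2^{-i}$.

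There is, however, a genuine gap in your argument for \eqref{ux-mB}. Telescoping through the chain $B(x,2^{-j})$, $j\ge i$, and applying Theorem \ref{mm} at each scale produces, after you bound $m_{g_k}^{\gamma/C}(B(x,2^{-j+2}))$ by $M_{\gamma/C}g_k(x)$, a sum of the form
\[
C\sum_{j\ge i}2^{-js}\sum_{k=j-3}^{j}M_{\gamma/C}\,g_k(x)
\;\le\; C2^{-is}\sum_{k=i-3}^{i-1}M_{\gamma/C}\,g_k(x)+C\sum_{k\ge i}2^{-ks}M_{\gamma/C}\,g_k(x),
\]
and the infinite tail over $k\ge i$ cannot be ``absorbed'' into a finite window: the functions $M_{\gamma/C}g_k(x)$ for different $k$ are unrelated, so no exponent gain in $j$ lets you dominate $M_{\gamma/C}g_k(x)$, $k>i$, by the terms with $i-4\le k\le i-1$. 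What chaining gives is exactly the weaker estimate \eqref{av ie} of Remark \ref{medians are better} (an infinite, geometrically weighted sum over scales), not the stated finite-window bound. The paper's proof of \eqref{ux-mB} avoids chaining altogether: it compares $u(x)$ with $m_u^\gamma(A)$ where $A=B(y,2^{-i+3})\setminus B(y,2^{-i+2})$ is an annulus. Since $x\in B(y,2^{-i+1})$, every $z\in A$ satisfies $2^{-i+1}\le d(x,z)<2^{-i+4}$, so the defining inequality for the fractional gradient involves only $g_k$ with $i-4\le k\le i-2$; taking the $\gamma$-median over $z\in A$ (the nonempty spheres property guarantees $\mu(A)\ge C\mu(B(y,2^{-i+3}))$, so the median over $A$ is controlled by a median over the ball) yields $|u(x)-m_u^\gamma(A)|\le C2^{-is}\bigl(m_g^{\gamma/C}(B(y,2^{-i+3}))+g(x)\bigr)$ with $g=\max_{i-4\le k\le i-2}g_k$, and the remaining term $|m_u^\gamma(A)-m_u^\gamma(B(y,2^{-i}))|$ is handled the same way. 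This single-step annulus comparison is precisely the point where medians beat integral averages, and it is what produces the finite index window; you should replace the telescoping in \eqref{ux-mB} by it.
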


\begin{proof}
\noindent \eqref{mB1-mB2}: Let $B_1$ and $B_2$ be as in the claim and let $c\in\re$. 
Using Lemma \ref{median lemma}, we have 
\[
|m_u^\gamma(B_1)-c|\le m_{|u-c|}^{\gamma}(B_1)\le m_{|u-c|}^{\gamma/C_1}(B_2)
\text{ and }|m_u^\gamma(B_2)-c|\le m_{|u-c|}^{\gamma}(B_2), 
\]
from which the claim follows using Lemma \ref{median lemma} and inequality
\[
|m_u^\gamma(B_1)-m_u^\gamma(B_2)|\le |m_u^\gamma(B_1)-c|+|m_u^\gamma(B_2)-c|.
\]

\noindent \eqref{ux-mB}: Let $x\in B(y,2^{-i+1})$ and let $A=B(y,2^{-i+3})\setminus B(y,2^{-i+2})$. 
Now
\[
|u(x)-m_u^\gamma(B(y,2^{-i}))|
\le |u(x)-m_u^\gamma(A)| + |m_u^\gamma(A)-m_u^\gamma(B(y,2^{-i}))|,
\]
and, by a similar argument as in the proof of Theorem \ref{mm},
\[
|u(x)-m_u^\gamma(A)|
\le C2^{-is}\big(m_{g}^{\gamma/C}(B(y,2^{-i+3}))+g(x)\big),
\]
where $g=\max_{i-4\le k\le i-2} g_k$. 
Hence, by the fact that $B(y,2^{-i+3})\subset B(x,2^{-i+4})$, Lemma \ref{median lemma} and \eqref{M iso}, we have
\[
|u(x)-m_u^\gamma(A)|
\le C2^{-is}\sum_{k=i-4}^{i-2}M_{\gamma/C}\,{g_k}(x).
\]
The claim follows since, by Lemma \ref{median lemma}, a similar argument as in the proof of Theorem \ref{mm}, and \eqref{M iso},
\[
\begin{split}
|m_u^\gamma(A)-m_u^\gamma(B(y,2^{-i}))|
&\le m_{|u-m^\gamma_u(A)|}(B(y,2^{-i}))\\
&\le C 2^{-is} \sum_{k=i-4}^{i-2}m_{g_k}^{\gamma/C}(B(y,2^{-i+3}))\\
&\le C 2^{-is} \sum_{k=i-4}^{i-2}M_{\gamma/C}g_k(x).
\end{split}
\]

\noindent \eqref{ux-uy}: Let $x,y\in X$ and let $i\in\z$ be such that $2^{-i-1}<d(x,y)\le 2^{-i}$. 
Then 
\begin{align*}
|u(x)-u(y)|
&\le |u(x)-m_u^\gamma(B(x,2^{-i}))|+|m_u^\gamma(B(x,2^{-i}))-m_u^\gamma(B(y,2^{-i}))|\\
&\quad +|u(y)-m_u^\gamma(B(y,2^{-i}))|,
\end{align*}  
and the claim follows using \eqref{mB1-mB2} and \eqref{ux-mB}, Theorem \ref{mm} and \eqref{M iso}.
\end{proof}

\begin{remark}\label{medians are better}
Lemma \ref{pointwise lemma} \eqref{ux-mB} and Lemma \ref{median lemma} imply that, for every $t>0$, there exists a constant $C>0$ such that
\begin{equation}\label{med ie}
|u(x)-m_u^\gamma(B(y,2^{-i}))|\le C2^{-is}\sum_{k=i-4}^{i-1}\big(\M g_k^t(x)\big)^{1/t}
\end{equation}
for all $y\in X$, $i\in\z$ and for almost all $x\in B(y,2^{-i+1})$. 
For integral averages, only a weaker estimate 
\begin{equation}\label{av ie}
|u(x)-u_{B(y,2^{-i})}|\le C2^{-is}\sum_{k=i-4}^\infty 2^{(i-k)s'}\big(\M g_k^t(x)\big)^{1/t},
\end{equation}
where $t>Q/(Q+s)$ and $0<s'<s$, is known to hold.
The estimate \eqref{av ie} can be proven using a chaining argument and a Sobolev--Poincar\'e type inequality from \cite{GKZ}, see \cite[Lemma 4.2]{HKT}. Somewhat surprisingly, with medians, a better estimate \eqref{med ie} follows by a completely elementary argument.
\end{remark}

The pointwise estimates of Lemma \ref{pointwise lemma2} in terms of the fractional sharp median maximal function are needed in the proof of the H\"older continuity of the approximating function in Theorem \ref{holder representative B}.

\begin{definition}
Let $0<\gamma\le1/2$, $\beta>0$, $R>0$. Let $u\in L^0(X)$.
The {\em (restricted, uncentered) fractional sharp $\gamma$-median maximal function} of $u$ is 
$\tilde u^{\gamma,\#}_{\beta, R}\colon X\to[0,\infty]$,
\[
\tilde u^{\gamma,\#}_{\beta, R}(x)
=\sup_{0<r\le R,\,x\in B(y,r)}\,r^{-\beta} \inf_{c\in\re}m_{|u-c|}^\gamma(B(y,r)),
\]
and the {\em (restricted) fractional sharp $\gamma$-median maximal function} is
$u^{\gamma,\#}_{\beta, R}\colon X\to[0,\infty]$,
\[
u^{\gamma,\#}_{\beta, R}(x)
=\sup_{0<r\le R}\,r^{-\beta} \inf_{c\in\re}m_{|u-c|}^\gamma(B(x,r)).
\] 
The unrestricted versions $u^{\gamma,\#}_{\beta,\infty}$, $\tilde u^{\gamma,\#}_{\beta,\infty}$ are denoted shortly by 
$u^{\gamma,\#}_{\beta}$ and $\tilde u^{\gamma,\#}_{\beta}$.
\end{definition}

It follows easily from the definitions and Lemma \ref{median lemma} that 
\begin{equation}\label{maximal functions comparable}
u^{\gamma,\#}_{\beta, R}(x)\le \tilde u^{\gamma,\#}_{\beta, R}(x)
\le 2^\beta u^{\gamma/c_D,\#}_{\beta, 2R}(x).
\end{equation}

\begin{lemma}\label{pointwise lemma2} 
Let $0<\gamma\le 1/2$ and $\beta>0$. 
Let $u\in L^0(X)$.

\begin{enumerate}
\item\label{mB1-mB2 sharp}
If $B_1$ and $B_2=B(x,r)$ are balls such that $B_1\subset B_2$ and $\mu(B_2)\le c_1\mu(B_1)$, then
\[
|m_u^\gamma(B_1)-m_u^\gamma(B_2)|
\le Cr^\beta u^{\gamma/{c_1},\#}_{\beta,r}(x).
\]

\item\label{ux-mB sharp}
If $y\in X$ and $r>0$, then
\[
|u(x)-m_u^\gamma(B(y,r))|\le Cr^\beta u^{\gamma/{C},\#}_{\beta,Cr}(x)
\]
for almost all $x\in B(y,2r)$.

\item \label{ux-uy sharp}
Inequality
\[
|u(x)-u(y)|\le Cd(x,y)^\beta\Big(u^{\gamma/{C},\#}_{\beta,3d(x,y)}(x)+u^{\gamma/{C},\#}_{\beta,3d(x,y)}(y)\Big)
\]
holds for almost all $x,y\in X$.
\end{enumerate} 
\end{lemma}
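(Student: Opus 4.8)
The plan is to prove the three parts in the order they are stated, each one feeding into the next, and to reduce everything to Lemma~\ref{pointwise lemma}\eqref{mB1-mB2} together with the basic median facts in Lemma~\ref{median lemma}; note that none of the three statements mentions a fractional gradient, so these are the only inputs needed.

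For part~\eqref{mB1-mB2 sharp} I would just combine Lemma~\ref{pointwise lemma}\eqref{mB1-mB2}, which gives $|m_u^\gamma(B_1)-m_u^\gamma(B_2)|\le 2\inf_{c\in\re}m_{|u-c|}^{\gamma/c_1}(B_2)$, with the definition of the restricted fractional sharp median maximal function: since $B_2=B(x,r)$, allowing the radius $r$ in that definition gives $\inf_{c\in\re}m_{|u-c|}^{\gamma/c_1}(B(x,r))\le r^\beta u^{\gamma/c_1,\#}_{\beta,r}(x)$, and the claim follows with $C=2$ and no further computation.

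For part~\eqref{ux-mB sharp} the idea is a telescoping chain of concentric balls shrinking to $x$. Fix $y$, $r$ and $x\in B(y,2r)$. Since $B(y,r)\subset B(x,3r)$ and $\mu(B(x,3r))\le c_D^3\mu(B(y,r))$ by doubling, part~\eqref{mB1-mB2 sharp} bounds $|m_u^\gamma(B(y,r))-m_u^\gamma(B(x,3r))|$ by $Cr^\beta u^{\gamma/C,\#}_{\beta,3r}(x)$. Then, with $\rho_j=3\cdot2^{-j}r$ for $j\ge 0$, apply part~\eqref{mB1-mB2 sharp} to each pair $B(x,\rho_{j+1})\subset B(x,\rho_j)$ (here $\mu(B(x,\rho_j))\le c_D\mu(B(x,\rho_{j+1}))$), use that $u^{\gamma/c_D,\#}_{\beta,\rho}(x)$ is nondecreasing in $\rho$ to replace $\rho_j$ by $3r$ in the maximal function, and sum the geometric series $\sum_j 2^{-j\beta}$, which converges precisely because $\beta>0$. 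Since $m_u^\gamma(B(x,\rho_j))\to u(x)$ for a.e.\ $x$ by Lemma~\ref{median lemma}\eqref{med leb point}, the telescoped sum equals $u(x)-m_u^\gamma(B(x,3r))$ off a fixed null set, and adding the first estimate yields $|u(x)-m_u^\gamma(B(y,r))|\le Cr^\beta u^{\gamma/C,\#}_{\beta,3r}(x)$ for a.e.\ $x\in B(y,2r)$; the different dilated parameters $\gamma/c_D$ and $\gamma/c_D^3$ are merged into a single $\gamma/C$ using that $m_u^\gamma$ is nonincreasing in $\gamma$ (Lemma~\ref{median lemma}\eqref{med gammas}). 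Note that the largest ball occurring in this argument has radius $3r$, so the proof in fact delivers the radius $3r$ in the maximal function, which is slightly stronger than the stated $Cr$ and is what part~\eqref{ux-uy sharp} needs.

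For part~\eqref{ux-uy sharp}, take $x\neq y$ (the case $x=y$ is trivial), set $r=d(x,y)$, insert the common value $m_u^\gamma(B(y,r))$, and write $|u(x)-u(y)|\le|u(x)-m_u^\gamma(B(y,r))|+|u(y)-m_u^\gamma(B(y,r))|$. Since both $x$ and $y$ lie in $B(y,2r)$, applying part~\eqref{ux-mB sharp} in the sharpened form (radius $3r$) to the point $x$ and to the point $y$ bounds the right-hand side by $Cr^\beta\big(u^{\gamma/C,\#}_{\beta,3r}(x)+u^{\gamma/C,\#}_{\beta,3r}(y)\big)$ for a.e.\ $x,y$, which is the assertion. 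The only step that requires genuine care is part~\eqref{ux-mB sharp}: keeping every ball radius bounded by $3r$, doing the doubling bookkeeping in each application of part~\eqref{mB1-mB2 sharp}, and handling the almost-everywhere convergence; everything else is formal.
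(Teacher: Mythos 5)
Your proposal is correct and follows essentially the same route as the paper: part \eqref{mB1-mB2 sharp} is read off from Lemma \ref{pointwise lemma}\eqref{mB1-mB2}, part \eqref{ux-mB sharp} combines a telescoping sum over concentric balls shrinking to $x$ (using Lemma \ref{median lemma}\eqref{med leb point} and the convergence of $\sum_j 2^{-j\beta}$) with a single doubling comparison between $B(y,r)$ and a ball of radius $3r$ centered at $x$, and part \eqref{ux-uy sharp} inserts a median over a ball containing both points and applies part \eqref{ux-mB sharp} twice. The only, immaterial, difference is that the paper telescopes from $B(x,r)$ and treats the comparison of $m_u^\gamma(B(x,r))$ with $m_u^\gamma(B(y,r))$ via $B(x,3r)$ as a separate step, whereas you pass to $B(x,3r)$ first and telescope from there.
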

 
\begin{proof}
Inequality \eqref{mB1-mB2 sharp} follows from Lemma \ref{pointwise lemma} \eqref{mB1-mB2}.

To prove \eqref{ux-mB sharp}, let $x$ be such that Lemma \ref{median lemma} \eqref{med leb point} holds
and let $c\in\re$. Then
\[
|u(x)-m_u^\gamma(B(y,r))|
\le |u(x)-m_u^\gamma(B(x,r))| + |m_u^\gamma(B(x,r))-m_u^\gamma(B(y,r))|,
\]
where, by a telescoping argument and Lemma \ref{pointwise lemma} \eqref{mB1-mB2},
\begin{equation}\label{ux-mBx sharp}
\begin{aligned}
|u(x)-m_u^\gamma(B(x,r))|
&\le \sum_{j=0}^\infty  |m_u^\gamma(B(x,2^{-j-1}r))-m_u^\gamma(B(x,2^{-j}r))|\\
& \le 2\sum_{j=0}^\infty m_{|u-c|}^{\gamma/c_D}(B(x,2^{-j}r)\\
&\le Cr^\beta u^{\gamma/{C},\#}_{\beta,r}(x).
\end{aligned}
\end{equation}
Since $B(y,r)\subset B(x,3r)$ with comparable measures, Lemma \ref{pointwise lemma} \eqref{mB1-mB2} shows that 
\[
|m_u^\gamma(B(x,r))-m_u^\gamma(B(y,r))|
\le 4 m_{|u-c|}^{\gamma/c_D^2}(B(x,3r))
\le Cr^\beta u^{\gamma/{C},\#}_{\beta,3r}(x),
\]
and the claim follows. 

For \eqref{ux-uy sharp}, let $x,y\in X$ be such that Lemma \ref{median lemma} \eqref{med leb point} holds.
Then 
\[
|u(x)-u(y)|
\le |u(x)-m_u^\gamma(B(x,d(x,y)))|+|m_u^\gamma(B(x,d(x,y)))-u(y)|,
\]
and the claim follows using \eqref{ux-mB sharp}.
\end{proof}

The following Leibniz rule for a function having a fractional $s$-gradient and a bounded, compactly supported Lipschitz function has been proved in \cite[Lemma 3.10]{HIT}. 
To prove the norm estimates of lemma below, $s$-gradient $(g'_k)_{k\in\z}$, 
\[
g'_k=
\begin{cases}
h_k,\quad &\text{if }k<k_L,\\
\rho_k,\quad &\text{if }k\ge k_L,
\end{cases}
\]
where $k_L$ is an integer such that $2^{k_L-1}<L\le 2^{k_L}$, is used for $u\ph$.

\begin{lemma}\label{leibniz}
Let $0<s<1$, $0<p<\infty$, $0<q\le\infty$, $L>0$, and let $S\subset X$ be a measurable set.
Let $u\colon X\to\re$ be a measurable function with $(g_k)\in \D^s(u)$ and
let $\ph$ be a bounded $L$-Lipschitz function supported in $S$.
Then sequences $(h_k)_{k\in\z}$ and $(\rho_k)_{k\in\z}$, where
\[
\rho_k=\big(g_k\Vert \ph\Vert_{\infty}+2^{k(s-1)}L|u|\big)\ch{\operatorname{supp}\ph}
 \quad\text{and}\quad
h_k=\big(g_k+2^{s k+2}|u|\big)\Vert \ph\Vert_{\infty}\ch{\operatorname{supp}\ph}
\]
are fractional $s$-gradients of $u\ph$.
Moreover, if $u\in N^s_{p,q}(S)$, then $u\ph\in N^s_{p,q}(X)$ and $\|u\ph\|_{N^s_{p,q}(X)}\le C\|u\|_{N^s_{p,q}(S)}$. 
A similar result holds for functions in $M^s_{p,q}(S)$.
\end{lemma}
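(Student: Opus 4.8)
The plan is to verify the pointwise fractional $s$-gradient inequality for $u\ph$ by hand, and then to estimate the norm of the mixed gradient $(g'_k)$ displayed just before the statement.

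For the gradient claims, I would fix $x,y$ outside the exceptional set of $(g_k)$ with $2^{-k-1}\le d(x,y)<2^{-k}$ and start from
\[
u(x)\ph(x)-u(y)\ph(y)=\big(u(x)-u(y)\big)\ph(x)+u(y)\big(\ph(x)-\ph(y)\big).
\]
The first term is at most $\|\ph\|_\infty\,|u(x)-u(y)|\le d(x,y)^s\|\ph\|_\infty\big(g_k(x)+g_k(y)\big)$, already of the required form. For the second term I would distinguish cases according to whether $x$ and $y$ lie in $\operatorname{supp}\ph$. If both do, then $|\ph(x)-\ph(y)|\le Ld(x,y)$ together with the elementary bound $d(x,y)^{1-s}\le 2^{-k(1-s)}=2^{k(s-1)}$ yields the $\rho_k$-term $d(x,y)^s2^{k(s-1)}L|u(y)|$, whereas $|\ph(x)-\ph(y)|\le 2\|\ph\|_\infty$ together with $1\le 2^{(k+1)s}d(x,y)^s$ and $2^{(k+1)s+1}\le 2^{ks+2}$ yields the $h_k$-term $d(x,y)^s2^{ks+2}\|\ph\|_\infty|u(y)|$. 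If $y\notin\operatorname{supp}\ph$, then $\ph(y)=0$ and only $u(x)\ph(x)$ survives; since $\ph$ is $L$-Lipschitz and vanishes at $y$, both $|\ph(x)|\le Ld(x,y)$ and $|\ph(x)|\le\|\ph\|_\infty$ hold and reproduce the $\rho_k$- and $h_k$-bounds at $x\in\operatorname{supp}\ph$ (so the cutoff $\chi_{\operatorname{supp}\ph}$ costs nothing). The case $x\notin\operatorname{supp}\ph$ is symmetric, and the left side vanishes if neither point lies in $\operatorname{supp}\ph$. Since $\rho_k(x)+\rho_k(y)$ and $h_k(x)+h_k(y)$ dominate the collected bounds, $(\rho_k)$ and $(h_k)$ — and hence the mixture $(g'_k)$, each of whose entries equals $\rho_k$ or $h_k$ for its own index — are fractional $s$-gradients of $u\ph$.

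For the norm estimates I would choose $(g_k)$ nearly optimal for $\|u\|_{\dot N^s_{p,q}(S)}$ (resp.\ $\dot M^s_{p,q}(S)$), form $(g'_k)$, and split the $l^q$-sum over $k$ at $k_L$. The $g_k$-parts of $h_k$ and $\rho_k$ contribute at most $C\|\ph\|_\infty\|(g_k)\|$, i.e.\ essentially $\|\ph\|_\infty\|u\|_{\dot N^s_{p,q}(S)}$ after taking the infimum. The $|u|$-parts are controlled by geometric series: $\sum_{k<k_L}2^{skq}\le C(s,q)2^{sk_Lq}\le C(s,q)L^{sq}$ since $2^{k_L}<2L$, and $L^q\sum_{k\ge k_L}2^{(s-1)kq}\le C(s,q)L^q2^{(s-1)k_Lq}\le C(s,q)L^{sq}$ since $2^{k_L}\ge L$ and $s-1<0$; these factors multiply $\|u\|_{L^p(S)}$. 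Together with $\|u\ph\|_{L^p(X)}\le\|\ph\|_\infty\|u\|_{L^p(S)}$ this gives $\|u\ph\|_{N^s_{p,q}(X)}\le C\|u\|_{N^s_{p,q}(S)}$ with $C$ depending on $s,q,L,\|\ph\|_\infty$; for $q=\infty$ the sums become suprema and the same bounds hold, and the Triebel--Lizorkin case is identical after replacing $l^q(L^p)$ by $L^p(l^q)$ (estimating the $l^q$-norm pointwise before taking the $L^p$-norm).

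The step I expect to require the most care is the case analysis at the boundary of $\operatorname{supp}\ph$, in particular arranging that the Lipschitz estimate $|\ph(x)|\le Ld(x,y)$ takes over from the vanishing of $\ph$ when one point leaves the support; on the norm side, the slightly delicate point is to check that the cutoff $k_L$ with $2^{k_L-1}<L\le 2^{k_L}$ is exactly what makes both geometric tails summable with the correct power of $L$. Neither step is deep, but both must be carried out carefully.
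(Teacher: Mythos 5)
Your proof is correct. Note that the paper itself does not prove this lemma: it is quoted from \cite[Lemma 3.10]{HIT}, and the only proof-related remark in the text is the identification of the mixed gradient $(g'_k)$ (equal to $h_k$ for $k<k_L$ and to $\rho_k$ for $k\ge k_L$) used for the norm bound. Your argument supplies the expected details: the product decomposition $u(x)\ph(x)-u(y)\ph(y)=(u(x)-u(y))\ph(x)+u(y)(\ph(x)-\ph(y))$, with the Lipschitz bound converted via $d(x,y)^{1-s}\le 2^{k(s-1)}$ for $\rho_k$ and the sup bound converted via $1\le 2^{(k+1)s}d(x,y)^s$ for $h_k$; and the two geometric tails $\sum_{k<k_L}2^{skq}$ and $L^q\sum_{k\ge k_L}2^{(s-1)kq}$, both of order $L^{sq}$ by the choice of $k_L$. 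You also correctly identified and defused the one genuinely delicate point, namely that when one of the two points lies outside $\operatorname{supp}\ph$ the term $g_k(y)\Vert\ph\Vert_\infty$ is not available from $\rho_k(y)$ or $h_k(y)$ (both vanish there), so one must abandon the decomposition and bound $u(x)\ph(x)$ directly using $|\ph(x)|=|\ph(x)-\ph(y)|\le Ld(x,y)$ and $|\ph(x)|\le\Vert\ph\Vert_\infty$; this is exactly why the cutoff $\ch{\operatorname{supp}\ph}$ is harmless. I have no corrections.
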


\section{Proof of Theorems \ref{holder representative B} and  \ref{holder representative TL} - H\"older approximation}\label{proofs}

In the proof, we use the representative $\tilde u$,
\begin{equation}\label{u mato}
\tilde u(x)=\limsup_{r\to 0}m_u(B(x,r))
\end{equation}
for $u$ and denote it by $u$. 
By Lemma \ref{median lemma}, the limit of \eqref{u mato} exists and equals $u(x)$, except on a set of zero measure.
Since, by Lemma \ref{pointwise lemma2}, inequality 
\begin{equation}\label{u pointwise}
|u(x)-u(y)|
\le C d(x,y)^\beta\Big(u_{\beta,3 d(x,y)}^{\gamma/C,\#}(x) + u_{\beta,3 d(x,y)}^{\gamma/C,\#}(y)\Big)      
\end{equation}
holds for every $x,y\in X$ and for all $0<\beta\le 1$, $u$ is $\beta$-H\"older continuous 
if $\|u_{\beta}^{\gamma/C,\#}\|_\infty<\infty$. 
\smallskip

We will first assume that $u$ vanishes outside a ball. 
The general case follows using a localisation argument.
We will correct the function in ``the bad set'', where the fractional sharp median maximal function is large, using a discrete convolution. 
This kind of smoothing technique is used to prove corresponding approximation results for Sobolev functions on metric measure spaces in \cite[Theorem 5.3]{HjKi} and \cite[Theorem 5]{KiTu}.  Since we use medians instead of integral averages in the discrete convolution, the proof works also for $0<p,q\le 1$.

For the bad set, we will use a Whitney type covering from \cite[Theorem III.1.3]{CW}, 
\cite[Lemma 2.9]{MS}. For each open set $U\subset X$, there are balls
$B_i=B(x_i,r_i)$, $i\in\n$, where $r_i=\dist(x_i,X\setminus U)/10$,
such that
\begin{enumerate}
\item\label{whitney disjoint}
the balls $1/5B_i$ are disjoint,  
\item\label{whitney cover}
$U=\cup_{i\in\n}B_i$,  
\item\label{whitney inside U}
$5B_i\subset U$ for each $i$,
\item\label{whitney distance to U}
if $x\in 5B_i$, then $5r_i\le\dist(x,X\setminus U)\le15r_i$,  
\item\label{whitney x*i}
for each $i$, there is $x^*_i\in X\setminus U$ such that $d(x_i,x^*_i)<15r_i$, and 
\item\label{whitney overlap}
$\sum_{i=1}^{\infty}\ch{5B_i}\le C\ch{U}$.
\end{enumerate}
Corresponding to a Whitney covering, there is a sequence $(\ph_i)_{i\in\n}$ of $K/r_i$-Lipschitz functions, called a partition of unity, 
such that $\operatorname{supp}\ph_i\subset 2B_i$, $0\le\ph_i\le1$, and $\sum_{i=1}^{\infty}\ph_i=\ch{U}$, 
see for example \cite[Lemma 2.16]{MS}.

\begin{proof}[Proof of Theorem \ref{holder representative B}]
 Let $u\in N^s_{p,q}(X)$ and let $(g_k)_{k\in\z}$ be a fractional $s$-gradient of $u$ such that $\|(g_k)\|_{l^q(L^p(X))}\le 2\|u\|_{N^s_{p,q}(X)}$. Let $0<\beta\le s$.
\smallskip

\noindent{\sc Step 1:} Assume that $u$ is supported in $B(x_0,1)$ for some $x_0\in X$. 

Let $0<\gamma\le1/2$ and $\lambda>0$. 
We will modify $u$ in set 
\[
E_{\lambda}=\big\{x\in X:\tilde u^{\gamma/c_E,\#}_\beta(x)>\lambda\big\},
\]
where $c_E$ will be the largest constant in the fractional sharp median maximal functions of $u$ below in the proof of the H\"older continuity of $v$. 
We need a Whitney covering $\{B_i\}_i$ of $E_{\lambda}$
and a corresponding partition of unity $(\ph_i)_i$.
For each $x_i$, let $x^*_i$ be the ``closest'' point in $X\setminus E_{\lambda}$.

We begin with the properties of the set $E_\lambda$. 
It follows directly from the definition that $E_{\lambda}$ is open. 
By \eqref{maximal functions comparable}, $E_\lambda\subset\big\{x\in X:u^{\gamma/(c_Dc_E),\#}_\beta(x)>2^{-\beta}\lambda\big\}$,
and \eqref{u pointwise} shows that $u$ is $\beta$-H\"older continuous in $X\setminus E_{\lambda}$ . 
\smallskip

\noindent {\bf Claim 1:} There is $\lambda_0>0$ such that 
$E_{\lambda}\subset B(x_0,2)$ for each $\lambda>\lambda_0$. 

\begin{proof}
Since $\operatorname{supp} u\subset B(x_0,1)$, by \eqref{maximal functions comparable}, it suffices to show that there is 
$\lambda_0>0$ such that 
\begin{equation}\label{lambda0}
r^{-\beta}m^{\gamma/(c_Ec_D)}_u(B(x,r))<\lambda_0
\end{equation}
for all $x\in X$ and $r>1$. 
If $B=B(x,r)$, $r>1$ and $r^{-\beta}m^{\gamma/(c_Ec_D)}_u(B(x,r))=a>0$,
then $B\cap B(x_0,1)\ne\emptyset$ because $\operatorname{supp} u\subset B(x_0,1)$. 
Using Lemma \ref{median lemma} and the doubling property of $\mu$, we obtain
\begin{align*}
r^{-\beta}m^{\gamma/(c_Ec_D)}_u(B(x,r))
&\le\bigg(\frac{c_Ec_D}\gamma\vint{B}|u|^p\,d\mu\bigg)^{1/p}\\
&\le\bigg(\frac{c_Ec_D}\gamma\mu(B(x_0,1))^{-1}\int_{B(x_0,1)}|u|^p\,d\mu\bigg)^{1/p}\\
&\le\bigg(\frac{c_Ec_D}\gamma\bigg)^{1/p}\mu(B(x_0,1))^{-1/p}\|u\|_{L^p(X)},
\end{align*}
from which Claim 1 follows. 
\end{proof}

\noindent{\bf Claim 2:} There is a constant $R>0$, independent of $u$ and the parameters of the theorem, such that 
$\cH^{(s-\beta)p,q/p}_R(E_\lambda)\to0$ as $\lambda\to\infty$.

\begin{proof}
We will show that 
\begin{equation}\label{H Elambda}
\cH^{(s-\beta)p,q/p}_R(E_\lambda)\le C\lambda^{-p}\|u\|_{\dot N^s_{p,q}(X)}^p,
\end{equation}
where the constant $C>0$ is independent of $u$ and $\lambda$.
\medskip

Let $x\in E_\lambda$ and $\lambda>\lambda_0$. 
Let $r>0$ and let $l\in\z$ be such that $2^{-l-1}\le r<2^{-l}$.
Using the doubling condition, Theorem \ref{mm}, and Lemma \ref{median lemma}, we obtain
\begin{align*}
\inf_{c\in\re}m_{|u-c|}^{\gamma/C}(B(x,r))
&\le C2^{-ls}\sum_{k= l-3}^l m_{g_k}^{\gamma/C}(B(x,2^{-l+2}))\\
&\le C2^{-l\beta}\sum_{k= l-3}^l\;\Big(\,\frac{C}{\gamma}2^{-l(s-\beta)}\vint{B(x,2^{-l+2})}g_k^p\,d\mu\Big)^{1/p},
\end{align*}
which implies, by \eqref{lambda0}, that
\[
u^{\gamma/C,\#}_{\beta}(x)=u^{\gamma/C,\#}_{\beta,1}(x)
\le C\sup_{i\ge-1}2^{-i(s-\beta)}\Big(\,\vint{B(x,2^{-i+5})}g_i^p\,d\mu\Big)^{1/p}. 
\]
Hence
\[
E_\lambda\subset
\bigg\{x\in X: C\sup_{i\ge-1}2^{-i(s-\beta)}\Big(\,\vint{B(x,2^{-i+5})}g_i^p\,d\mu\Big)^{1/p} >\lambda\bigg\}=:F_\lambda.
\]
By the standard $5r$-covering lemma, there are disjoint balls $B_j$, $j\in\n$, of radii $r_j\le R$ with $R=2^6$ such that the balls 
$5B_j$ cover $F_\lambda$ and
\[
\mu(B_j){r_j}^{-(s-\beta)p}<C\lambda^{-p}\int_{B_j}g_{i+5}^p\,d\mu
\]
for $j\in I_i$.
Using the disjointness of the balls $B_j$, we have
\[
\sum_{j\in I_i} \frac{\mu(B_j)}{r_j^{(s-\beta)p}}\le C\lambda^{-p} \|g_{i+5}\|_{L^p(X)}^p
\]
for every $i\in\z$, which implies that
\[
\cH^{(s-\beta)p,q/p}_R(F_\lambda)\le C\lambda^{-p}\bigg(\sum_{i\in\z}\|g_i\|_{L^p(X)}^q\bigg)^{p/q}.
\]
Hence the claim follows. 
\end{proof}

Note that, by Claim 2 and Lemma \ref{measure and NH content}, $\mu(E_\lambda)\to0$ as $\lambda\to0$.
\smallskip

\noindent{\bf Extension to $E_{\lambda}$: }
We define $v$, a candidate for the approximating function, as a Whitney type extension of $u$ to $E_{\lambda}$,
\[
v(x)=
\begin{cases}
u(x),&\text{ if }x\in X\setminus E_{\lambda},\\
\sum_{i=1}^{\infty}\ph_i(x)m^\gamma_u(2B_i),&\text{ if }x\in E_{\lambda},
\end{cases}  
\]
and select the open set $\Omega$ to be $E_\lambda$ for sufficiently large $\lambda>\lambda_0$. 
Hence property \eqref{holder 1} of Theorem 
\ref{holder representative B} follows from the definition of $v$ and property \eqref{holder 4} from Claim 3.
Since $\operatorname{supp} u\subset B(x_0,1)$ and $E_{\lambda}\subset
B(x_0,2)$ for $\lambda>\lambda_0$, the support of $v$ is in $B(x_0,2)$.
By the bounded overlap of the balls $2B_i$, there is a bounded number of indices in
\[
I_x=\{i:x\in2B_i\}
\]  
for each $x\in E_{\lambda}$, and the bound is independent of $x$.
\smallskip

Next we prove an estimate for $|v(x)-v(\bar x)|$, where
$x\in E_{\lambda}$ and $\bar x\in X\setminus E_{\lambda}$ is such
that $d(x,\bar x)\le 2\dist(x,X\setminus E_{\lambda})$. 
Using the properties of the functions $\ph_i$, we have that
\begin{equation}\label{vx-vx}
|v(x)-v(\bar x)|
=\Big|\sum_{i=1}^{\infty}\ph_i(x)(u(\bar x)-m^\gamma_u(2B_i))\Big|
\le \sum_{i\in I_x}|u(\bar x)-m^\gamma_u(2B_i)|,
\end{equation}
where, by the fact that $2B_i\subset B(\bar x,Cr_i)$ and $B(\bar x,Cr_i)\subset CB_i$ for all $i\in I_x$, and 
by Lemma \ref{pointwise lemma2}, 
\begin{equation}\label{u-u2B}
\begin{aligned}
|u(\bar x)-m^\gamma_u(2B_i)|
&\le |u(\bar x)-m^\gamma_u(B(\bar x,Cr_i))|+|m^\gamma_u(B(\bar x,Cr_i))-m^\gamma_u(2B_i)|\\
&\le cr_i^{\beta}u_{\beta,Cr_i}^{\gamma/C,\#}(\bar x). 
\end{aligned}
\end{equation}
Since $r_i\approx\dist(x,X\setminus E_{\lambda})$, 
estimates \eqref{vx-vx}-\eqref{u-u2B} show that
\begin{equation}\label{vx-vx2}
|v(x)-v(\bar x)|
\le c\dist(x,X\setminus E_{\lambda})^{\beta}u^{\gamma/C,\#}_{\beta}(\bar x)  
\le c\lambda d(x,\bar x)^{\beta}.
\end{equation}

\noindent{\bf Proof of \eqref{holder 2} - the H\"older continuity of $v$:}
We will show that
\begin{equation}\label{s holder}
|v(x)-v(y)|\le c\lambda d(x,y)^{\beta}\text{ for all }x,y\in X.
\end{equation}

\noindent (i) If $x,y\in X\setminus E_{\lambda}$, then \eqref{s holder} 
follows from \eqref{u pointwise} and the definition of $E_{\lambda}$. 

\noindent (ii) Let $x,y\in E_{\lambda}$ and $d(x,y)\le M$, where
\[
M=\min\bigl\{\dist(x,X\setminus E_{\lambda}),
               \dist(y,X\setminus E_{\lambda})\bigr\}.
\]
Let $\bar x\in X\setminus E_{\lambda}$ and sets $I_x$ and $I_y$ be as above. 
We may assume that $\dist(x,X\setminus E_{\lambda})\le \dist(y,X\setminus E_{\lambda})$.
Then
\[
\dist(y,X\setminus E_{\lambda})\le d(x,y)+\dist(x,X\setminus E_{\lambda})
\le 2\dist(x,X\setminus E_{\lambda}),
\]
and hence $r_i$ is comparable to $M$ for all $i\in  I_x\cup I_y$.

By the properties of the functions $\ph_i$, we have
\begin{align*}
|v(x)-v(y)|
&=\Big| \sum_{i=1}^\infty \big(\ph_i(x)-\ph_i(y)\big) \big(u(\bar x)-m^\gamma_u(2B_i)\big)\Big|\\
&\le c d(x,y)\sum_{i\in I_x\cup I_y}r_i^{-1}|u(\bar x)-m^\gamma_u(2B_i)|.
\end{align*}
Hence, using a similar argument as for \eqref{u-u2B}, the fact that there are a bounded number of indices in $I_x\cup I_y$, and the assumption $M\ge d(x,y)$, we obtain
\[
|v(x)-v(y)|
\le c d(x,y)\sum_{i\in I_x\cup I_y} r_i^{\beta-1}u^{\gamma/C,\#}_\beta(\bar x)
\le c d(x,y)^\beta\lambda.
\]

\noindent (iii) Let $x,y\in E_{\lambda}$ and $d(x,y)>M$. 
Let $\bar x, \bar y\in X\setminus E_{\lambda}$ be as above. 
Using inequalities \eqref{vx-vx2} and \eqref{u pointwise}, we have
\begin{align*}
|v(x)-v(y)|
&=|v(x)-v(\bar x)|+|v(\bar x)-v(\bar y)|+|v(\bar y)-v(y)|\\
&\le c\lambda \big(d(x,\bar x)^\beta+d(\bar x,\bar y)^\beta+d(y,\bar y)^\beta\big)
\le c\lambda d(x,y)^\beta.
\end{align*}

\noindent (iv) Let $x\in E_\lambda$ and $y\in X\setminus E_\lambda$. Then, by \eqref{vx-vx2} and \eqref{u pointwise},
\begin{align*}
|v(x)-v(y)|
&=|v(x)-v(\bar x)|+|u(\bar x)-u(y)|
\le c\lambda d(x,y)^\beta.
\end{align*}
The H\"older continuity of $v$ with estimate \eqref{s holder} follows from the four cases above. 
Now we select $c_E$ in the definition of $E_\lambda$ to be the maximum of the constants $C$ in $\lambda/C$'s in the fractional median maximal functions in the proof above and \eqref{u pointwise}.
\medskip

\noindent{\bf Proof of \eqref{holder 3} - a fractional $s$-gradient for $v$:}
\begin{lemma}\label{sgrad for v}
There is a constant $C>0$ such that $(C\tilde g_k)_{k\in\z}$, where 
\begin{equation}\label{eq:gradient}
\tilde g_k=\sup_{j\in\z}2^{-|j-k|\delta}M_{\gamma/C}\,g_j
\end{equation}
and $\delta=\min\{s,1-s\}$, is a fractional $s$-gradient of $v$.
\end{lemma}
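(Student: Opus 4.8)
The plan is to verify the fractional $s$-gradient inequality for $v$ separately on the three regions $(X\setminus E_\lambda)\times(X\setminus E_\lambda)$, $E_\lambda\times E_\lambda$, and the mixed region, and then to bound $\|(\tilde g_k)\|_{l^q(L^p(X))}$ using the boundedness of the median maximal operator and Lemma \ref{summing lemma}. First I would fix $x,y\in X\setminus E$ (for a suitable null set $E$) with $2^{-k-1}\le d(x,y)<2^{-k}$ and show
\[
|v(x)-v(y)|\le Cd(x,y)^s\big(\tilde g_k(x)+\tilde g_k(y)\big).
\]
On $(X\setminus E_\lambda)\times(X\setminus E_\lambda)$ we have $v=u$, so Lemma \ref{pointwise lemma} \eqref{ux-uy} gives the bound with $\sum_{j=k-4}^k M_{\gamma/C}g_j(x)+M_{\gamma/C}g_j(y)$ on the right, which is dominated by $\tilde g_k(x)+\tilde g_k(y)$ since the exponent $2^{-|j-k|\delta}$ is bounded below on $k-4\le j\le k$.

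Next I would treat the mixed case, say $x\in E_\lambda$ and $y\in X\setminus E_\lambda$: write $v(x)-v(y)=\sum_i \ph_i(x)\big(m_u^\gamma(2B_i)-u(y)\big)$, use that for $i\in I_x$ the radius $r_i\approx\dist(x,X\setminus E_\lambda)$ and that $2B_i\subset B(y,Cr_i)$ with comparable measures once $d(x,y)\le C\dist(x,X\setminus E_\lambda)$ — here I would split into the subcase $d(x,y)$ comparable to $\dist(x,X\setminus E_\lambda)$ and the subcase $d(x,y)$ much larger, inserting a point $\bar x\in X\setminus E_\lambda$ with $d(x,\bar x)\approx\dist(x,X\setminus E_\lambda)$ and chaining through $v(x)-v(\bar x)$ and $u(\bar x)-u(y)$ as in \eqref{vx-vx2}. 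For $E_\lambda\times E_\lambda$ I would use the $K/r_i$-Lipschitz bound on the $\ph_i$ together with $\sum_i\ph_i\equiv 1$ on $E_\lambda$ to get $|v(x)-v(y)|\le Cd(x,y)\sum_{i\in I_x\cup I_y}r_i^{-1}|u(\bar x)-m_u^\gamma(2B_i)|$ when $d(x,y)\le M$, and reduce the far case to the mixed case plus the $X\setminus E_\lambda$ case by chaining through $\bar x,\bar y$. In each estimate the quantities $|u(\bar x)-m_u^\gamma(2B_i)|$ are controlled, via Lemma \ref{pointwise lemma2} \eqref{ux-mB sharp} combined with Remark \ref{medians are better} / Lemma \ref{pointwise lemma} \eqref{ux-mB}, by $r_i^s\sum_{j}2^{-|j-l|\delta}M_{\gamma/C}g_j(\bar x)$ with $2^{-l}\approx r_i$; the point is that the extra factor $(d(x,y)/r_i)$ or $(r_i/d(x,y))$ coming from the Lipschitz bound or from $r_i$ being larger than $d(x,y)$ is absorbed into the geometric weight, producing exactly a term of the form $d(x,y)^s 2^{-|j-k|\delta}M_{\gamma/C}g_j(\bar x)$, and finally $M_{\gamma/C}g_j(\bar x)\le M_{\gamma/C'}g_j(x)$ (up to adjusting $\gamma$ and the distance scale) because $\bar x$ lies within $C d(x,y)$ of $x$. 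Summing over the bounded set $I_x\cup I_y$ then yields $\tilde g_k(x)+\tilde g_k(y)$.

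Once $(C\tilde g_k)\in\D^s(v)$ is established, the norm bound is routine: by \eqref{Lp boundedness of median maximal op}, $\|M_{\gamma/C}g_j\|_{L^p(X)}\le C\|g_j\|_{L^p(X)}$ for every $p>0$, and by Minkowski's inequality (when $q\ge1$) or \eqref{a sum} (when $0<q<1$) together with Lemma \ref{summing lemma} applied with $a=2^\delta>1$ and $b=q$,
\[
\|(\tilde g_k)\|_{l^q(L^p(X))}^{\min\{1,q\}}
\le \sum_{k}\Big(\sum_j 2^{-|j-k|\delta}\|M_{\gamma/C}g_j\|_{L^p(X)}\Big)^{\min\{1,q\}}\cdots
\le C\sum_j\|g_j\|_{L^p(X)}^{\min\{1,q\}},
\]
so $\|v\|_{\dot N^s_{p,q}(X)}\le C\|(g_k)\|_{l^q(L^p(X))}\le C\|u\|_{N^s_{p,q}(X)}$.

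The main obstacle I anticipate is the bookkeeping in the mixed and interior cases: one must carefully track how the dilation factor between $d(x,y)$ and the Whitney radius $r_i$ turns into the correct power $d(x,y)^s$ times a summable weight $2^{-|j-k|\delta}$, and check that the shifts in the index (from $r_i\approx 2^{-l}$ versus $d(x,y)\approx 2^{-k}$, and the extra $\pm4$ shift in Lemma \ref{pointwise lemma}) and in the median parameter $\gamma$ only cost constants. The choice $\delta=\min\{s,1-s\}$ is exactly what makes the Leibniz-type loss ($2^{k(s-1)}$ from the Lipschitz factor) and the Poincaré-type loss ($2^{-ls}$) both fit under a single geometric weight; making this precise is the crux of the argument, but it is purely a matter of organizing the estimates already proved in Lemmas \ref{pointwise lemma} and \ref{pointwise lemma2}.
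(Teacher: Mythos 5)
Your proposal is correct and follows essentially the same route as the paper's proof: the same four-case decomposition (both points in $X\setminus E_\lambda$; mixed; both in $E_\lambda$ with $d(x,y)\le M$; both in $E_\lambda$ with $d(x,y)>M$), the same use of the median Poincar\'e inequality (Theorem \ref{mm} / Lemma \ref{pointwise lemma}) together with the $K/r_i$-Lipschitz partition of unity and bounded overlap, and the same key observation that the ratio between $d(x,y)$ and the Whitney radius $r_i$ produces the weights $2^{-|j-k|s}$ and $2^{-|j-k|(1-s)}$, both dominated by $2^{-|j-k|\delta}$. Two small remarks: the pointwise transfer $M_{\gamma/C}g_j(\bar x)\lesssim M_{\gamma/C'}g_j(x)$ fails for arbitrary radii, but you only invoke it for medians over balls of radius comparable to $d(x,\bar x)$, where it is valid (the paper avoids the issue by centering its Poincar\'e estimates directly at the point lying in $E_\lambda$); and the norm estimate you append at the end is really the content of the separate Lemma \ref{IntForGradOfExtensionBesov} rather than of this lemma.
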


\begin{proof}
Since every fractional $s$-gradient of $u$ is a fractional $s$-gradient of $|u|$, we may assume that $u\ge 0$. 

Let $k\in\z$ and let $x,y\in X$ such that $2^{-k-1}\le d(x,y)<2^{-k}$.
We will show that
\begin{equation}\label{vgk}
|v(x)-v(y)|\le Cd(x,y)^{s}(\tilde g_k(x)+\tilde g_k(y)),
\end{equation}
where constant $C>0$ is independent of $k$. 
For each $x\in E_\lambda$, let set $I_{x}$ be as earlier. 
We consider the following four cases:

\noindent
{\sc Case 1:} 
Since $g_{k}\le\tilde{g_{k}}$ almost everywhere on $X\setminus E_\lambda$, we have, for almost every 
$x,y\in X\setminus E_\lambda$,
\[
|v(x) - v(y)|
=| u(x)-u(y)|
\le d(x,y)^{s}(\tilde g_{k}(x)+\tilde g_{k}(y)).
\]
\noindent
{\sc Case 2:} $x\in X\setminus E_\lambda$ and $y\in E_\lambda$. 
Let $R=d(y,X\setminus E_\lambda)$ and let $l$ be such that $2^{-l-1}<R\le 2^{-l}$. 
Then we have
\begin{align*}
|v(x)-v(y)|&= |u(x)-v(y)|
=\Big|\sum_{i=1}^{\infty}\ph_i(y)(u(x)-m^\gamma_u(2B_i))\Big|\\
&\le C|u(x)-u(y)|+\sum_{i\in I_y}|u(y)-m^\gamma_u(2B_i)|,
\end{align*}
where the desired estimate for the first term follows as in Case 1.
For the second term, using the properties of the functions $\ph_{i}$, the fact that $2B_i\subset B(y,R)$ with comparable radius for all 
$i\in I_y$, the doubling property, Lemma \ref{pointwise lemma}, Theorem \ref{median poincare}, and the facts that there are bounded number of indices in $I_{y}$ and $k\le l$, we obtain
\begin{equation}\label{uy-u2Bi}
\begin{aligned}
\sum_{i\in I_y}|u(y)-m^\gamma_u(2B_i)|
&\le\sum_{i\in I_y}\Big(|u(y)-m^\gamma_u(B(y,R))|+|m^\gamma_u(B(y,R))-m^\gamma_u(2B_i)|\Big)\\
&\le C2^{-ls}\sum_{j=l-4}^l M_{\gamma/C}\,g_j(y)\\
&\le C2^{-ks}\sup_{j\ge k} 2^{(k-j)s}M_{\gamma/C}\,g_j(y)\\
&\le Cd(x,y)^s\tilde g_k(y).
\end{aligned}
\end{equation}
Hence inequality \eqref{vgk} follows in this case.
\smallskip

\noindent
{\sc Case 3:} $x,y\in E_\lambda$, $d(x,y)\le M$, where
\[
M=\min\bigl\{\dist(x,X\setminus E_{\lambda}),
               \dist(y,X\setminus E_{\lambda})\bigr\}.
\]
We may assume that $\dist(x,X\setminus E_{\lambda})\le \dist(y,X\setminus E_{\lambda})$.
Then $\dist(y,X\setminus E_{\lambda})\le 2\dist(x,X\setminus E_{\lambda})$, 
$r_i$ is comparable to $M$ 
and $2B_i\subset B(x,4M)$ for all $i\in  I_x\cup I_y$.

Let $l$ be such that $2^{-l-1}<4M\le 2^{-l}$. 
Using the doubling condition, the properties of the functions $\ph_{i}$,
the fact that there are bounded number of indices in $I_{x}\cup I_{y}$ and 
Lemma \ref{pointwise lemma} and Theorem \ref{median poincare}, we have 
\begin{align*}
 |v(x)-v(y)|
&\le \sum_{i=1}^\infty|\ph_i(x)-\ph_i(y))||m^\gamma_u(2B_i)-m^\gamma_u(B(x,2^{-l}))|\\
&\le Cd(x,y)\sum_{i\in I_x\cup I_y} r_i^{-1}|m^\gamma_u(2B_i)-m^\gamma_u(B(x,2^{-l}))|\\
&\le Cd(x,y)\sum_{i\in I_x\cup I_y} r_i^{-1}2^{-ls}\sum_{j=l-3}^l m_{g_j}^{\gamma/C}(B(x,2^{-l+2}))\\
&\le Cd(x,y)M^{-1}2^{-ls}\sum_{j=l-3}^l M_{\gamma/C}\,g_j(x),
\end{align*}
where, since $M\approx 2^{-l}$ and $d(x,y)<2^{-k}$, 
\[
d(x,y)M^{-1}2^{-ls}
\le Cd(x,y)^sd(x,y)^{1-s}2^{l(1-s)}
\le Cd(x,y)^s\ 2^{(l-k)(1-s)}.
\]
Hence
\[
\begin{split}
|v(x)-v(y)|
\le \ &Cd(x,y)^{s}
\sup_{j\le k} 2^{(j-k)(1-s)}M_{\gamma/C}\,g_j(x)\\
\le \ &Cd(x,y)^{s}\tilde g_k(x),
\end{split}
\]
which implies the claim in this case. 
\medskip

\noindent
{\sc Case 4:} $x,y\in E_\lambda$, $d(x,y)> M$. Now
\begin{align*}
|v(x)-v(y)|
&\le\sum_{i\in I_x}|u(x)-m^\gamma_u(2B_i)|+\sum_{i\in I_y}|u(y)-m^\gamma_u(2B_i)|+|u(x)-u(y)|,
\end{align*}
and the claim follows using the properties of the functions $\ph_i$, similar estimates for $|u(x)-m^\gamma_u(2B_i)|$ and
$|u(y)-m^\gamma_u(2B_i)|$ as in \eqref{uy-u2Bi}, and the fact that $r_i$ is comparable to $\dist(x,X\setminus E_{\lambda})$ for all 
$i\in I_x$ (and similarly for $I_y$).
\end{proof}

\noindent{\bf Proof of \eqref{holder 3} - $v\in N^{s}_{p,q}$ and approximation in norm:} 
\begin{lemma}\label{IntForGradOfExtensionBesov}
$\|(\tilde g_k)\|_{l^q(L^p(X))}\le C\|(g_k)\|_{l^q(L^p(X))}$.
\end{lemma}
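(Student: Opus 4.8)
The plan is to deduce this norm estimate from the scalar $L^p$-boundedness of the median maximal operator, inequality \eqref{Lp boundedness of median maximal op}, together with the geometric summation Lemma \ref{summing lemma}. Set $\tau=\min\{1,p\}$, so that $p/\tau\ge 1$ (hence $L^{p/\tau}(X)$ obeys the triangle inequality) and $0<\tau\le 1$. Since $\delta=\min\{s,1-s\}>0$, raising the definition \eqref{eq:gradient} of $\tilde g_k$ to the power $\tau$ and using $\big(\sup_j a_j\big)^\tau=\sup_j a_j^\tau\le\sum_j a_j^\tau$ gives the pointwise bound
\[
\tilde g_k(x)^\tau\le\sum_{j\in\z}2^{-|j-k|\delta\tau}\big(M_{\gamma/C}g_j(x)\big)^\tau
\]
for every $x\in X$ and every $k\in\z$.

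Next I would take $L^{p/\tau}(X)$-norms of both sides. Writing $c_j=\|g_j\|_{L^p(X)}$, using the triangle inequality in $L^{p/\tau}(X)$ and then \eqref{Lp boundedness of median maximal op}, we get
\[
\|\tilde g_k\|_{L^p(X)}^\tau=\big\|\tilde g_k^\tau\big\|_{L^{p/\tau}(X)}\le\sum_{j\in\z}2^{-|j-k|\delta\tau}\big\|M_{\gamma/C}g_j\big\|_{L^p(X)}^\tau\le C\sum_{j\in\z}2^{-|j-k|\delta\tau}c_j^\tau.
\]
Then I would raise this to the power $q/\tau$, sum over $k\in\z$, and apply Lemma \ref{summing lemma} with $a=2^{\delta\tau}>1$, exponent $q/\tau\in(0,\infty)$, and the sequence $(c_j^\tau)_{j\in\z}$, obtaining
\[
\|(\tilde g_k)\|_{l^q(L^p(X))}^q=\sum_{k\in\z}\big(\|\tilde g_k\|_{L^p(X)}^\tau\big)^{q/\tau}\le C\sum_{k\in\z}\Big(\sum_{j\in\z}2^{-|j-k|\delta\tau}c_j^\tau\Big)^{q/\tau}\le C\sum_{j\in\z}c_j^q=C\|(g_k)\|_{l^q(L^p(X))}^q,
\]
and taking $q$-th roots yields the claim. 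Combined with Lemma \ref{sgrad for v} (which gives $(C\tilde g_k)\in\D^s(v)$) and applied to a fractional $s$-gradient $(g_k)$ of $u$ with $\|(g_k)\|_{l^q(L^p(X))}\le 2\|u\|_{N^s_{p,q}(X)}$, this shows $v\in\dot N^s_{p,q}(X)$ with $\|v\|_{\dot N^s_{p,q}(X)}\le C\|u\|_{N^s_{p,q}(X)}$.

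The computation is entirely routine; the only point requiring care is the choice $\tau=\min\{1,p\}$, which simultaneously makes the passage from the supremum in \eqref{eq:gradient} to an $\ell^\tau$-sum legitimate and makes $L^{p/\tau}(X)$ a normed (not merely quasi-normed) space, so that the triangle inequality can be applied term by term inside the series. Everything else is quoted directly: the $L^p$-bound for $M_{\gamma/C}$ from \eqref{Lp boundedness of median maximal op} and the geometric-decay summation from Lemma \ref{summing lemma}. I do not anticipate any genuine obstacle.
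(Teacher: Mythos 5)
Your proof is correct and follows essentially the same route as the paper: dominate the supremum defining $\tilde g_k$ pointwise by a geometrically weighted sum, apply the $L^p$-boundedness of $M_{\gamma/C}$ from \eqref{Lp boundedness of median maximal op}, and conclude with Lemma \ref{summing lemma}. The only (harmless) difference is that the paper raises to the power $p$ itself and integrates the resulting sum of nonnegative functions termwise, which avoids your auxiliary exponent $\tau=\min\{1,p\}$ and the triangle inequality in $L^{p/\tau}$, but both versions are valid.
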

\begin{proof}
By \eqref{Lp boundedness of median maximal op},
\[
\|\tilde g_k\|_{L^p(X)}^p
\le \sum_{j\in\z}2^{-|j-k|\delta p}\|M_{\gamma/C}\,g_j\|_{L^p(X)}^p\le C\sum_{j\in\z}2^{-|j-k|\delta p}\|g_j\|_{L^p(X)}^p.
\]
Hence, by Lemma \ref{summing lemma}, we obtain
\[
\sum_{k\in\z}\|\tilde g_k\|_{L^p(X)}^q
\le C\sum_{k\in\z}\Big(\sum_{j\in\z}2^{-|j-k|\delta p}\|g_j\|_{L^p(X)}^p\Big)^{q/p}
\le C\sum_{j\in\z}\|g_j\|_{L^p(X)}^q,
\]
which gives the claim.
\end{proof}

By the properties of the Whitney covering and Lemma \ref{median lemma}, we have
\begin{align*}
|v(x)|
&\le \sum_{i=1}^\infty \ph_i(x)m_{u}^\gamma(2B_i)
\le C\sum_{i\in I_x} \ph_i(x)M_{\gamma/C}(u\ch{E_\lambda})(x)\\
&\le CM_{\gamma/C}(u\ch{E_\lambda})(x)
\end{align*}
for each $x\in E_{\lambda}$, and, by the boundedness of $M_{\gamma/C}$ on $L^{p}$, that  
\begin{equation}\label{norm of v}
\begin{aligned}
\|v\|_{L^p(X)}^p
&\le \|u\|_{L^p(X\setminus E_{\lambda})}^p
+ C\| M_{\gamma/C}(u\ch{E_\lambda})\|_{L^p(X)}^p
\le C\|u\|_{L^p(X)}^p.
\end{aligned}
\end{equation}
Since $v=u$ in $X\setminus E_{\lambda}$, we have that
\[
\|u-v\|_{L^p(X)}=\|u-v\|_{L^p(E_\lambda)},
\]
which tends to $0$ as $\lambda\to\infty$ because $\mu(E_{\lambda})\to0$ as $\lambda\to\infty$. Hence $v\to u$ in $L^p(X)$.
\smallskip

\noindent {\bf Claim:} Sequence $(h_k^\lambda)_{k\in\z}$, where
\[
h_k^\lambda=\tilde g_k\ch{E_\lambda},
\]
is a fractional $s$-gradient of $u-v$ and $\|(h_k^\lambda)\|_{l^q(L^p(X))}\to 0$ as $\lambda\to\infty$. 

\begin{proof}
We have to show that inequality
\begin{equation}\label{h sgrad}
 |(u-v)(x)-(u-v)(y)|
 \le Cd(x,y)^{s}(h_k^\lambda(x)+h_k^\lambda(y))
\end{equation}
holds outside a set of measure zero whenever $2^{-k-1}\le d(x,y)<2^{-k}$.

If $x,y\in X\setminus E_\lambda$, then $u-v=0$ and \eqref{h sgrad} holds.
If $x,y\in E_\lambda$, then inequality \eqref{h sgrad} holds because $(g_k)$ is a fractional $s$-gradient of $u$, $(\tilde g_k)$ is a fractional $s$-gradient of $v$ and $g_{k}\le\tilde{g_{k}}$ almost everywhere. 

If $x\in E_\lambda$ and $y\in X\setminus E_\lambda$, then $(u-v)(y)=0$ and $h_k^\lambda(y)=0$ for all $k$.
Let $R=d(x,X\setminus E_\lambda)$ and let $l$ be such that $2^{-l-1}<R\le 2^{-l}$. Then $l\ge k$.
Using similar arguments as earlier in the proof and the properties of the functions $\ph_{i}$, the fact that $2B_i\subset B(x,R)$ with comparable radius for all $i\in I_x$, the doubling property, Lemma \ref{pointwise lemma} and Theorem \ref{median poincare}, we have 
\begin{align*}
|(u-v)(x)|
&\le\sum_{i\in I_x}|u(x)-m_u^\gamma(2B_i)|\\
&\le\sum_{i\in I_x}\Big(|u(x)-m_u^\gamma(B(x,R))|+|m_u^\gamma(B(x,R))-m_u^\gamma(2B_i)|\Big)\\
&\le C2^{-ls}\sum_{j=l-4}^l M_{\gamma/C}\,g_j(x)\\
&\le Cd(x,y)^s\tilde g_k(x).
\end{align*}
Hence $(h_k^\lambda)\in\D^s(u-v)$. 
Since $\mu(E_\lambda)\to 0$ as $\lambda\to\infty$, Lemma \ref{B norm abs cont} implies that $\|(h_k^\lambda)\|_{l^q(L^p(X))}\to0$ as  $\lambda\to0$.
\end{proof}
We conclude that $v\to u$ in $N^s_{p,q}(X)$.
\smallskip

\noindent {\sc Step 2:} General case.

\noindent Let $\eps>0$.
By the $5r$-covering theorem, there is a covering of $X$ by balls $B(a_j,1/2)$, $j\in\n$, such that balls $B(a_j,1/10)$ are disjoint and the balls $B(a_j,2)$ have bounded overlap. 
Let $B_j=B(a_j,1)$, $j\in\n$, and let $(\psi_j)$ be a partition of unity such that $\sum_{j=1}^{\infty}\psi_j=1$,
each $\psi_j$ is $L$-Lipschitz, $0\le\psi_j\le1$, and $\operatorname{supp}\psi_j\subset B_j$ for all $j\in\n.$

Let $u_j=u\psi_j$. Then
\begin{equation}\label{ux}
u(x)=\sum_{j=1}^{\infty}u_j(x),
\end{equation}
and the sum is finite for all $x\in X$. 
Lemma \ref{leibniz} shows that $u_j\in N^s_{p,q}(X)$ for each $j$ and $(g_{j,k})_{k\in\z}$, where
\[
g_{j,k}=
\begin{cases}
\big(g_k+2^{s k+2}|u|\big)\ch{B_j}\quad &\text{if }k<k_L,\\
\big(g_k+2^{k(s-1)}L|u|\big)\ch{B_j},\quad &\text{if }k\ge k_L,
\end{cases}
\] 
and $k_L$ is an integer such that $2^{k_L-1}<L\le 2^{k_L}$, is a fractional $s$-gradient of $u_j$.

Since $\operatorname{supp}u_j\subset B_j$, the first step of the proof
shows there are functions $v_j\in N^s_{p,q}(X)$ and open sets $\Omega_j\subset 2B_j$ such that
\renewcommand{\labelenumi}{(\roman{enumi})}
\begin{enumerate}
\item\label{holder 1j} $v_j=u_j$ in $X\setminus \Omega_j$, 
 $\operatorname{supp}v_j\subset 2B_j$, 
\item\label{holder 2j}$v_j\in N^s_{p,q}(X)$ is $\beta$-H\"older continuous,  
\item\label{holder 3j}$\|u_j-v_j\|_{N^s_{p,q}(X)}<2^{-j}\eps$,
\item\label{holder 4j}$\cH^{(s-\beta)p,q/p}_R(\Omega_j)<2^{-j/r}\eps$, where $r=\min\{1,q/p\}$.
\item\label{holder 5j}$(\tilde g_{j,k})_{k\in\z}$ is a fractional $s$-gradient of $v_j$.  
\end{enumerate}

We define $\Omega=\cup_{j=1}^{\infty}\Omega_j$, and show that function
$v=\sum_{j=1}^{\infty}v_j$ has properties \eqref{holder 1}-\eqref{holder 4} of Theorem \ref{holder representative B}. 
The first property follows from (i) and \eqref{ux}. 
The Netrusov-Hausdorff content estimate follows from (iv) and Lemma \ref{NH content subadd}. 
By \eqref{s holder},
$|v_j(x)-v_j(y)|\le C\lambda_j d(x,y)^{\beta}$ for all $x,y\in X$.
Since, by the proof above, the constant $\lambda_j$ depends on $\eps$ and on $j$, the H\"older continuity of the functions $v_j$ and the fact that $\operatorname{supp}v_j\subset 2B_j$ give H\"older continuity of $v$ only in bounded subsets of $X$.
By (iii), we have
\begin{equation}\label{norm of u-v}
\sum_{j=1}^{\infty}\|u_j-v_j\|_{N^s_{p,q}(X)}<\sum_{j=1}^{\infty}2^{-j}\eps=\eps,  
\end{equation}
that is, the series $\sum_{j=1}^{\infty}(u_j-v_j)$ convergences absolutely, and hence converges in the quasi-Banach space
$N^s_{p,q}(X)$. 
Since $u=\sum_{j=1}^{\infty}u_j$ is in $N^s_{p,q}(X)$, also $\sum_{j=1}^{\infty}v_j$ converges in $N^s_{p,q}(X)$. 
Moreover, by \eqref{norm of u-v} and \eqref{quasi norm ie}, we obtain 
\[
\|u-v\|_{N^s_{p,q}(X)}^r
\le C\sum_{j=1}^{\infty}\|u_j-v_j\|_{N^s_{p,q}(X)}^r<c\eps^r.
\]
\end{proof}

\begin{proof}[Proof of Theorem \ref{holder representative TL}]
The proof for a Triebel--Lizorkin function $u\in M^s_{p,q}(X)$ requires only small modifications. 
To obtain the desired Hausdorff content estimate, a counterpart of Claim 2 of the Besov case, 
let $(g_k)_{k\in\z}\in \D^s(u)\cap L^p(X;l^q)$. Then $g=\sup_{k\in\z} g_k$ belongs to $L^p(X)$ and is an $s$-gradient of $u$.
It follows that
\[\begin{split}
E_\lambda\subset
\bigg\{x\in X: C\sup_{i\ge-6}2^{-i(s-\beta)}\Big(\,\vint{B(x,2^{-i})}g^p\,d\mu\Big)^{1/p} >\lambda\bigg\}=F_\lambda.
\end{split}
\]
Hence, using a standard argument, we obtain that $\cH^{(s-\beta)p}_{2^6}(F_\lambda)\le \lambda^{-p}\|g\|_{L^p(X)}^p$.

Moreover, Lemma \ref{IntForGradOfExtensionBesov} is replaced by the estimate
\begin{equation}\label{grad est}
\|(\tilde g_k)\|_{L^p(X;l^q)}\le C\|(g_k)\|_{L^p(X;l^q)}.
\end{equation}
Since
\[
\sum_{k\in\z}\tilde g_k^q\le \sum_{k\in\z}\sum_{j\in\z}2^{-|j-k|\delta q}(M_{\gamma/C}\,g_j)^q\le C \sum_{j\in\z}(M_{\gamma/C}\,g_j)^q,
\]
when $q<\infty$, and $\sup_{k\in\z}\tilde g_k\le C\sup_{k\in\z}M_{\gamma/C}\,g_k$, estimate
\eqref{grad est} follows from \eqref{Lplq boundedness of median maximal op}.

Finally, when $s=1$, we use Leibniz rule \cite[Lemma 5.20]{HjKi} instead of Lemma \ref{leibniz}.
\end{proof}

\section{Appendix}\label{app}
The fact that Haj\l asz--Besov and Haj\l asz--Triebel--Lizorkin spaces are complete (Banach spaces when $p,q\ge1$ and quasi-Banach spaces otherwise) has not been proved in earlier papers.

\begin{theorem}\label{m1p banach}
The spaces $N^s_{p,q}(X)$ and $M^s_{p,q}(X)$ are complete for all $0<s<\infty$, $0<p,q\le\infty$. 
\end{theorem}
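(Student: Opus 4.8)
The plan is to prove completeness via the standard criterion that a (quasi-)normed space is complete if and only if every absolutely (resp.\ $r$-absolutely) convergent series converges. So I would start with a sequence $(u_j)$ in $N^s_{p,q}(X)$ satisfying $\sum_j \|u_j\|_{N^s_{p,q}(X)}^r < \infty$, where $0<r\le 1$ is the Aoki--Rolewicz exponent from \eqref{quasi norm ie}, and produce a limit for the partial sums $S_N = \sum_{j=1}^N u_j$. The first step is to extract, for each $j$, a fractional $s$-gradient $(g_{j,k})_{k\in\z}\in\D^s(u_j)$ that nearly realizes the norm, say $\|(g_{j,k})\|_{l^q(L^p(X))}\le 2^{1/r}\|u_j\|_{N^s_{p,q}(X)}$, so that $\sum_j \|(g_{j,k})\|_{l^q(L^p(X))}^r < \infty$ as well.

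Next I would establish convergence of $(S_N)$ in $L^p(X)$: since $\sum_j\|u_j\|_{L^p(X)}^r \le \sum_j\|u_j\|_{N^s_{p,q}(X)}^r<\infty$ and $L^p(X)$ is complete (a known fact, for all $0<p\le\infty$), the partial sums converge in $L^p$ to some $u\in L^p(X)$; passing to a subsequence we may assume $S_N\to u$ $\mu$-a.e. Then I would exhibit a candidate fractional $s$-gradient for $u$. The natural choice is $g_k = \big(\sum_j g_{j,k}^{\,r}\big)^{1/r}$ if $r<1$, or more simply to use the $r$-triangle-inequality structure: for fixed $k$, $\|\big(\sum_j g_{j,k}\big)\|_{l^q(L^p)}$ need not be finite, so one instead works with the $r$-th powers. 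Concretely, set $g_k = \big(\sum_{j\in\n} g_{j,k}^r\big)^{1/r}$. Using the pointwise inequality $|a+b|^r\le |a|^r+|b|^r$ (this is \eqref{a sum}), on the complement of $\bigcup_j E_j$ (a null set) one checks for $x,y$ with $2^{-k-1}\le d(x,y)<2^{-k}$ that
\[
|u(x)-u(y)|^r \le \sum_j |u_j(x)-u_j(y)|^r \le \sum_j d(x,y)^{sr}\big(g_{j,k}(x)+g_{j,k}(y)\big)^r \le d(x,y)^{sr}\,2\big(g_k(x)^r+g_k(y)^r\big),
\]
which gives $|u(x)-u(y)|\le 2^{1/r}d(x,y)^s(g_k(x)+g_k(y))$; absorbing the constant (or rescaling $g_k$) shows $(g_k)\in\D^s(u)$, and since $\|g_k\|_{L^p(X)}^{r} = \|g_k^r\|_{L^{p/r}(X)} \le \sum_j\|g_{j,k}^r\|_{L^{p/r}} = \sum_j\|g_{j,k}\|_{L^p(X)}^r$, a further $l^{q/r}$ (quasi-)triangle/Minkowski estimate yields $\|(g_k)\|_{l^q(L^p(X))}^r\le \sum_j\|(g_{j,k})\|_{l^q(L^p(X))}^r<\infty$. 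Hence $u\in \dot N^s_{p,q}(X)\cap L^p(X)=N^s_{p,q}(X)$.

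Finally I would show $S_N\to u$ in the $N^s_{p,q}$-norm, which amounts to estimating the tail: applying the same construction to $u-S_N = \sum_{j>N}u_j$ gives a fractional $s$-gradient with norm bounded by $\big(\sum_{j>N}\|(g_{j,k})\|_{l^q(L^p(X))}^r\big)^{1/r}\to 0$, and the $L^p$ part of the tail goes to $0$ similarly; combining via \eqref{quasi norm ie} gives $\|u-S_N\|_{N^s_{p,q}(X)}\to 0$. The Triebel--Lizorkin case is identical with $l^q(L^p(X))$ replaced throughout by $L^p(X;l^q)$, using that $|a+b|^r\le|a|^r+|b|^r$ applies equally to the $l^q$-norms inside the integral and that $L^p(X;l^q)$-quasinorms obey an $r$-triangle inequality; the endpoint cases $p=\infty$ or $q=\infty$ need only the obvious modifications (sup in place of sums). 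I expect the main technical obstacle to be the careful bookkeeping of the quasi-norm constants when $\min\{p,q\}<1$ — in particular making sure the passage from the pointwise $r$-power inequality for gradients to the $l^q(L^p)$ or $L^p(l^q)$ bound is done with a single universal constant independent of $k$ and $N$ — rather than anything conceptually deep; everything else is a routine adaptation of the classical Riesz--Fischer argument.
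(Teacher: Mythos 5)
Your proposal is correct and follows essentially the same route as the paper: reduce to completeness of $L^p(X)$, realize the limit as an ($r$-)absolutely convergent series, and sum the fractional $s$-gradients of the pieces to get a gradient of the tail whose norm tends to zero via the Aoki--Rolewicz inequality. The only cosmetic difference is that the paper telescopes a Cauchy subsequence and uses the plain sum $\sum_j g_{j,k}$ as the candidate gradient (estimating its norm by \eqref{quasi norm ie}), whereas you use the series criterion and the $r$-power sum $(\sum_j g_{j,k}^r)^{1/r}$; both work.
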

 
\begin{proof}
We prove the Besov case, the proof for Triebel--Lizorkin spaces is similar.
Let $(u_i)_i$ be a Cauchy sequence in $N^s_{p,q}(X)$. 
Since $L^p(X)$ is complete, there exists a function $u\in L^p(X)$ such that $u_i\to u$ in $L^p(X)$ as $i\to\infty$. 
We will show that $(u_i)_i$ converges to $u$ in $N^s_{p,q}(X)$.

We may assume (by taking a subsequence) that 
\[
\|u_{i+1}-u_i\|_{N^s_{p,q}(X)}\le 2^{-i}
\] 
for all $i\in\n$ and that $u_i(x)\to u(x)$ as $i\to\infty$ for almost all $x\in X$. 
Hence, for each $i\in\n$, there exists $(g_{i,k})_k\in l^q(L^p(X))$ and a set $E_i$ of zero measure such that 
\[
|(u_{i+1}-u_i)(x)-(u_{i+1}-u_i)(y)|
\le d(x,y)^s(g_{i,k}(x)+g_{i,k}(y))
\]
for all $x,y\in X\setminus E_i$ satisfying $2^{-k-1}\le d(x,y)<2^{-k}$, and that $\|(g_{i,k})\|_{l^q(L^p(X))}\le 2^{-i}$.
This implies that
\[
|(u_{i+k}-u_i)(x)-(u_{i+k}-u_i)(y)|
\le d(x,y)^s\Big( \sum_{j=i}^\infty g_{j,k}(x)+ \sum_{j=i}^\infty g_{j,k}(y)\Big)
\]
for all $i,k\ge1$ for almost all $x,y\in X$. 
This together with the pointwise convergence shows that, letting $k\to\infty$, we have
\[
|(u-u_i)(x)-(u-u_i)(y)|
\le d(x,y)^s\Big( \sum_{j=i}^\infty g_{j,k}(x)+ \sum_{j=i}^\infty g_{j,k}(y)\Big).
\]
Hence $u-u_i$ has a fractional $s$-gradient $(\sum_{j=i}^\infty g_{j,k})_k$. 

When $p,q\ge 1$, we have $\|(\sum_{j=i}^\infty g_{j,k})_k\|_{l^q(L^p(X))}\le 2^{-i+1}$. 
If $0<\min\{p,q\}<1$, then, by \eqref{quasi norm ie}, there is $0<r<1$ such that
\[
\big\|(\sum_{j=i}^\infty g_{j,k})_k\big\|_{l^q(L^p(X))}^r
\le C\sum_{j=i}^\infty\|(g_{j,k})_k\|_{l^q(L^p(X))}^r
\le C2^{-ri}.
\] 
Hence, in both cases, $u-u_i\in N^s_{p,q}(X)$ and $u_i\to u$ in $N^s_{p,q}(X)$. Thus $u\in N^s_{p,q}(X)$ and the claim follows.
\end{proof}

\noindent {\bf Acknowledgements:} The research was supported by the Academy of Finland, grants no.\ 135561 and 272886. 
Part of this research was conducted during the visit of the second author to Forschungsinstitut f\"ur Mathematik of ETH Z\"urich, and she wishes to thank the institute for the kind hospitality.

\vspace{0.5cm}
\noindent
\small{\textsc{T.H.},}
\small{\textsc{Department of Mathematics and Statistics},}
\small{\textsc{P.O. Box 35},}
\small{\textsc{FI-40014 University of Jyv\"askyl\"a},}
\small{\textsc{Finland}}\\
\footnotesize{\texttt{toni.heikkinen@aalto.fi}}

\vspace{0.3cm}
\noindent
\small{\textsc{H.T.},}
\small{\textsc{Department of Mathematics and Statistics},}
\small{\textsc{P.O. Box 35},}
\small{\textsc{FI-40014 University of Jyv\"askyl\"a},}
\small{\textsc{Finland}}\\
\footnotesize{\texttt{heli.m.tuominen@jyu.fi}}


\begin{thebibliography}{000}

\bibitem{A2}D. R. Adams:
Besov capacity redux,
Problems in mathematical analysis. No. 42. J. Math. Sci. (N. Y.) 162 (2009), no. 3, 307--318.

\bibitem{Am}
L. Ambrosio:
Fine properties of sets of finite perimeter in doubling metric measure spaces,
Calculus of variations, nonsmooth analysis and related topics,
Set-Valued Anal. 10 (2002), no. 2-3, 111--128.

\bibitem{Ao}
T. Aoki:
Locally bounded linear topological spaces,
Proc. Imp. Acad. Tokyo 18, (1942). 588--594. 

\bibitem{BHS}
B. Bojarski, P. Haj\l asz, P. Strzelecki:
Improved $C^{k,\lambda}$ approximation of higher order Sobolev functions in norm and capacity, 
Indiana Univ. Math. J. 51 (2002), no. 3, 507--540. 

\bibitem{CW}
R. R Coifman and G. Weiss: 
Analyse harmonique non-commutative sur certains espaces homog\`enes,
Lecture Notes in Mathematics, Vol.242. Springer-Verlag, Berlin-New York, 1971. 

\bibitem{FS}
C. Fefferman and E. M. Stein:
Some maximal inequalities,
Amer. J. Math. 93 (1971), 107--115.

\bibitem{Fu}
N. Fujii:
A condition for a two-weight norm inequality for singular integral operators,
Studia Math. 98 (1991), no. 3, 175--190.


\bibitem{GKS}
A. Gogatishvili, P. Koskela and N. Shanmugalingam:
Interpolation properties of Besov spaces defined on metric spaces,
Math. Nachr. {\bf 283} (2010), no. 2, 215--231.

\bibitem{GKZ} 
A. Gogatishvili, P. Koskela and Y. Zhou:
Characterizations of Besov and Triebel--Lizorkin Spaces on Metric Measure Spaces, 
Forum Math. 25 (2013), no. 4, 787--819.

\bibitem{GLY}
L. Grafakos, L. Liu and D. Yang:
Vector-valued singular integrals and maximal functions on spaces of homogeneous type, 
Math. Scand. 104 (2009), 296--310.

\bibitem{H} 
P. Haj\l asz:
Sobolev spaces on an arbitrary metric space, 
Potential Anal. 5 (1996), 403--415.

\bibitem{HjKi}
P. Haj\l asz and J. Kinnunen: 
H\"older quasicontinuity of Sobolev functions on metric spaces,
Rev. Mat. Iberoamericana 14 (1998), no.3, 601--622.

\bibitem{HMY}
Y. Han, D. M\"uller, and D. Yang:
A theory of Besov and Triebel--Lizorkin spaces on metric measure spaces modeled on Carnot--Carath\'eodory spaces,
Abstr. Appl. Anal. 2008, Art. ID 893409, 250 pp.

\bibitem{HN}
L. I. Hedberg and Y. Netrusov:
An axiomatic approach to function spaces, spectral synthesis, and Luzin approximation, 
Mem. Amer. Math. Soc. 188 (2007), no. 882.

\bibitem{HIT}
T. Heikkinen, L. Ihnatsyeva and H. Tuominen:
Measure density and extension of Besov and Triebel--Lizorkin functions,
preprint 2014, http://arxiv.org/abs/1409.0379

\bibitem{HKT}
T. Heikkinen, P. Koskela and H. Tuominen:
Generalized Lebesgue points, in preparation.

\bibitem{HeTu}
T. Heikkinen and H. Tuominen:
 Smoothing properties of the discrete fractional maximal operator on Besov and Triebel--Lizorkin spaces, 
 Publ. Mat. 58 (2014), no. 2. 379--399.
 


\bibitem{JPW}
B. Jawerth, C. Perez and G. Welland: 
The positive cone in Triebel--Lizorkin spaces and the relation among potential and maximal operators,
Harmonic analysis and partial differential equations (Boca Raton, FL, 1988), 71--91, 
Contemp. Math., 107, Amer. Math. Soc., Providence, RI, 1990. 

\bibitem{JT}
B. Jawerth and A. Torchinsky: 
Local sharp maximal functions,
J. Approx. Theory 43 (1985), no. 3, 231--270. 

\bibitem{KiTu}
J. Kinnunen and H. Tuominen:
Pointwise behaviour of $M^{1,1}$ Sobolev functions,
Math. Z. 257 (2007), no. 3, 613--630.

\bibitem{KS}
P. Koskela and E. Saksman:
Pointwise characterizations of Hardy--Sobolev functions,
Math. Res. Lett. 15 (2008), no. 4, 727--744.



\bibitem{KYZ}
P. Koskela, D. Yang and Y. Zhou:
Pointwise Characterizations of Besov and Triebel--Lizorkin Spaces and Quasiconformal Mappings,
Adv. Math. 226 (2011), no. 4, 3579--3621.

\bibitem{KP}
V. G. Krotov and M.A. Prokhorovich:
The Luzin approximation of functions from the classes $W^p_\alpha$ on metric spaces with measure, 
(Russian) Izv. Vyssh. Uchebn. Zaved. Mat. 2008, no. 5, 55--66; translation in Russian Math. (Iz. VUZ) 52 (2008), no. 5, 47--57.


\bibitem{Le}
A. K. Lerner, 
A pointwise estimate for the local sharp maximal function with applications to singular integrals, 
Bull. London Math. Soc. 42 (2010), 843--856.

\bibitem{LP}
A. K. Lerner and C. P\'erez: 
Self-improving properties of generalized Poincar\'e type inequalities throught rearrangements, 
Math. Scand. 97 (2) (2005), 217--234.




\bibitem{L}
F. C. Liu:
A Luzin type property of Sobolev functions,
Indiana Univ. Math. J. 26 (1977), no. 4, 645--651. 

\bibitem{MS}
R. A. Mac\'ias and C. Segovia: 
A decomposition into atoms of distributions on spaces of homogeneous type,
Adv. in Math.  33 (1979), no.3, 271--309.  

\bibitem{Ma}
J. Mal\'y:
H\"older type quasicontinuity,
Potential Anal. 2 (1993), no. 3, 249--254. 

\bibitem{MZ}
J. H. Michael and W. P. Ziemer:
Lusin type approximation of Sobolev functions by smooth functions,
Contemp. Math. 42 (1985), 135--167. 

\bibitem{MY}
D. M\"uller and D. Yang:
A difference characterization of Besov and Triebel--Lizorkin spaces on RD-spaces,
Forum Math. {\bf 21} (2009), no. 2, 259--298.

\bibitem{Ne92}
Y. V. Netrusov:
Metric estimates for the capacities of sets in Besov spaces. (Russian), 
Trudy Mat. Inst. Steklov. 190 (1989), 159--185. 
translation in Proc. Steklov Inst. Math. 1992, no. 1, 167--192. 


\bibitem{Ne96}
Y. V. Netrusov:
Estimates of capacities associated with Besov spaces. (Russian), 
Zap. Nauchn. Sem. S.-Peterburg. Otdel. Mat. Inst. Steklov. (POMI) 201 (1992), 124--156, 
translation in J. Math. Sci. 78 (1996), no. 2, 199--217 

\bibitem{PT}
J. Poelhuis and A. Torchinsky:
Medians, continuity, and vanishing oscillation, 
Studia Math. 213 (2012), no. 3, 227--242.

\bibitem{Ro}
S. Rolewicz:
On a certain class of linear metric spaces,
Bull. Acad. Polon. Sci. Cl. III. 5 (1957), 471--473.

\bibitem{Sa}
Y. Sawano:
Sharp estimates of the modified Hardy--Littlewood maximal operator on the nonhomogeneous space via covering lemmas,
Hokkaido Math. J. 34 (2005), no. 2, 435--458.



\bibitem{SYY}
N. Shanmugalingam, D. Yang and W. Yuan:
Newton--Besov Spaces and Newton--Triebel--Lizorkin Spaces on Metric Measure Spaces,
to appear in Positivity, 
http://dx.doi.org/10.1007/s11117-014-0291-7

\bibitem{Sto}
B. M. Stocke:
A Lusin type approximation of Bessel potentials and Besov functions by smooth functions,
Math. Scand. 77 (1995), no. 1, 60--70. 

\bibitem{St}
J-O. Str\"omberg:
Bounded mean oscillation with Orlicz norms and duality of Hardy spaces, 
Indiana Univ. Math. J. 28 (1979), no. 3, 511--544.

\bibitem{Sw1}
D.  Swanson:
Pointwise inequalities and approximation in fractional Sobolev spaces,
Studia Math. 149 (2002), no. 2, 147--174. 

\bibitem{Sw2}
D. Swanson:
Approximation by H\"older continuous functions in a Sobolev space,
Rocky Mountain J. Math. 44 (2014), no. 3, 1027--1035. 

\bibitem{Tr} 
H. Triebel:
Theory of Function Spaces, 
Birkh\"auser Verlag, Basel, 1983.

\bibitem{Tr2}
 H. Triebel:
Theory of function spaces. II,
Monographs in Mathematics, 84. Birkh\"auser Verlag, Basel, 1992.

\bibitem{Y}
D. Yang:
New characterizations of Haj\l asz--Sobolev spaces on metric spaces,
Sci. China Ser. A {\bf 46} (2003), 675--689.

\bibitem{YZ}
D. Yang and Y. Zhou:
New properties of Besov and Triebel--Lizorkin spaces on RD-spaces,
Manuscripta Math. {\bf 134} (2011), no. 1-2, 59--90.

\bibitem{Zh}
Y. Zhou: 
Fractional Sobolev extension and imbedding,
Trans. Amer. Math. Soc. 367 (2015), no. 2, 959--979

\end{thebibliography}
\end{document}